\DeclareMathOperator*{\argmax}{arg\,max}
\DeclareMathOperator*{\argmin}{arg\,min}
\theoremstyle{plain}
\newtheorem{theorem}{Theorem}
\newtheorem{corollary}{Corollary}
\newtheorem{lemma}{Lemma}
\newtheorem{proposition}{Proposition}
\theoremstyle{remark}
\newtheorem{condition}{Condition}
\title{The infinite Viterbi alignment and decay-convexity}
\author{Nick Whiteley\thanks{%
    School of Mathematics, University of Bristol and the Alan Turing Institute.}, 
  Matt W. Jones \thanks{%
	School of Physiology, Pharmacology and Neuroscience, University of Bristol.}, 
  Aleks Domanski\thanks{School of Physiology, Pharmacology and Neuroscience, University of Bristol and the Alan Turing Institute.}           
  }
\begin{document}

\maketitle
\begin{abstract}
The infinite Viterbi alignment is the limiting maximum a-posteriori estimate
of the unobserved path in a hidden Markov model as the length of the
time horizon grows. For models on state-space
$\mathbb{R}^{d}$ satisfying a new ``decay-convexity'' condition,
we develop an approach to existence of the infinite Viterbi alignment in an infinite dimensional
Hilbert space. Quantitative bounds on the distance to the Viterbi
process, which are the first of their kind, are derived and used to illustrate how approximate estimation via parallelization can 
be accurate and scaleable to high-dimensional problems because the
rate of convergence to the infinite Viterbi alignment does not necessarily depend
on $d$. The results are applied to approximate estimation via parallelization and a model of neural population activity.
\end{abstract}

%\begin{frontmatter}
%%%%%%%%%%%%%%%%%%%%%%%%%%%%%%%%%%%%%%%%%%%%%%
%%                                          %%
%% Enter the title of your article here     %%
%%                                          %%
%%%%%%%%%%%%%%%%%%%%%%%%%%%%%%%%%%%%%%%%%%%%%%

%\title{A sample article title with some additional note\thanksref{T1}}
%\runtitle{The infinite Viterbi alignment and decay-convexity}
%\thankstext{T1}{A sample of additional note to the title.}

%\begin{aug}
%\author[A]{\inits{???}\fnms{Nick} \snm{Whiteley}\ead[label=e1]{nick.whiteley@bristol.ac.uk}},
%\author[B]{\fnms{Matt W.} \snm{Jones}\ead[label=e2,mark]{matt.jones@bristol.ac.uk}}
%\and
%\author[C]{\fnms{Aleks P. F.} \snm{Domanski}\ead[label=e3,mark]{aleks.domanski@bristol.ac.uk}}
%%%%%%%%%%%%%%%%%%%%%%%%%%%%%%%%%%%%%%%%%%%%%%%
%%% Addresses                                %%
%%%%%%%%%%%%%%%%%%%%%%%%%%%%%%%%%%%%%%%%%%%%%%%
%\address[A]{School of Mathematics, University of Bristol and the Alan Turing Institute. \printead{e1}}
%\address[B]{School of Physiology, Pharmacology and Neuroscience, University of Bristol. \printead{e2}}
%\address[C]{School of Physiology, Pharmacology and Neuroscience, University of Bristol and the Alan Turing Institute. \printead{e3}}
%\end{aug}

%\begin{keyword}
%\kwd{convex optimization}
%\kwd{hidden Markov models}
%\kwd{MAP estimation}
%\kwd{parallelization}
%\end{keyword}

%\end{frontmatter}

%%%%%%%%%%%%%%%%%%%%%%%%%%%%%%%%%%%%%%%%%%%%%%
%%%% Main text entry area:
\section{Introduction}\label{subsec:Background-and-motivation}

%\subsection{Hidden Markov models and the infinite Viterbi alignment}

Let  the \emph{signal} $(X_{n})_{n\geq 0}$ 
be a Markov chain with state space $\mathbb{R}^{d}$ whose initial
distribution and transition kernel admit densities $\mu(x)$ and $f(x,x^{\prime})$
with respect to $d$-dimensional Lebesgue measure. Let $(Y_{n})_{n\geq 0}$, called the \emph{observations},
be each valued in a measurable space $(\mathbb{Y},\mathcal{Y})$, 
conditionally independent given $(X_{n})_{n\geq 0}$ and
such that for any $A\in\mathcal{Y}$, the conditional probability
of $\{Y_{n}\in A\}$ given $(X_{n})_{n\geq 0}$ can be
written in the form $\int_{A}g(X_{n},y)\rho(\mathrm{d}y)$, where
$g:\mathbb{R}^{d}\times\mathbb{Y}\to[0,+\infty)$ and $\rho$ is a
measure on $\mathcal{Y}$. Models of this form, under the names hidden Markov models (HMMs) or state space models, are applied in a huge variety of fields including econometrics,
engineering, ecology, machine learning and neuroscience \cite{west2006bayesian,durbin2012time,douc2014nonlinear}. With:
\begin{align}
U_n(x_{0},\ldots,x_{n},y_{0},\ldots,y_{n} )&\coloneqq-\log\mu(x_{0})-\log g(x_{0},y_{0})\nonumber\\
&-\sum_{m=1}^{n}\log f(x_{m-1},x_{m})-\sum_{m=1}^{n}\log g(x_{m},y_{m}),\label{eq:U^m_intro}
\end{align}
the maximum
a-posteriori path estimation problem given $(y_{0},\ldots,y_{n})$
is to find:
\begin{equation}
(\xi_{n,0},\ldots,\xi_{n,n})\coloneqq \argmin_{x_0,\ldots,x_n}\;U_n(x_{0},\ldots,x_{n},y_{0},\ldots,y_{n}).\label{eq:optimization_problem}
\end{equation}
The dependence of $\xi_{n,0},\ldots,\xi_{n,n}$ on $y_{0},\ldots,y_{n}$ is not shown in the notation. With $y\coloneqq(y_0,y_1,\ldots)\in\mathbb{Y}^{\mathbb{N}}$, the \emph{infinite Viterbi alignment} is a sequence $\xi_{\infty}=(\xi_{\infty,n})_{n\geq 0}$
such that for any $m\in\mathbb{N}_{0}$,
\begin{equation}
(\xi_{\infty,0},\ldots,\xi_{\infty,m})=\lim_{n\to\infty}(\xi_{n,0},\ldots,\xi_{n,m}),\quad\text{for }\quad \mathbf{P}_Y \text{-almost all } y,\label{eq:Viterbi}
\end{equation}
where in general  $\xi_{\infty}=(\xi_{\infty,n})_{n\geq 0}$ is an unknown function of the infinite observation sequence $y=(y_0,y_1,\ldots)$ and $\mathbf{P}_Y$ is some probability measure on $\mathcal{Y}^{\otimes\mathbb{N}}$. In the ``well-specified'' case,  where $\mathbf{P}_Y$ is the probability law of the observation process $Y_0,Y_1,\ldots$ induced by the same $\mu$, $f$ and $g$ defining each function $U_n$ in \eqref{eq:U^m_intro},  the infinite Viterbi alignment as a function of $Y_0,Y_1,\ldots$ is called the \emph{Viterbi process}. 

Whilst the existence and uniqueness of the left hand side of \eqref{eq:optimization_problem} can be addressed directly in terms of $\mu$, $f$, and $g$, the existence and uniqueness of the infinite Viterbi alignment and Viterbi process are less obvious. Studies and applications   to date have mostly focused on the case where the state space of the signal is a discrete finite set
\cite{caliebe2002convergence,caliebe2006properties,koloydenko2008infinite,lember2008adjusted,lember2010constructive,kuljus2012asymptotic} and there 
 the convergence in (\ref{eq:Viterbi})
is with respect to the discrete metric. In this discrete setting the infinite Viterbi alignment and Viterbi process have been also been studied recently for pairwise Markov models \citep{lember2020existence,lember2021regenerativity}. However, the case of HMMs with signal state-space $\mathbb{R}^{d}$ is considerably less well-understood: the only 
work known to the authors which considers this case is \cite{chigansky2011viterbi}, where convergence in (\ref{eq:Viterbi}) is with respect to Euclidean distance, and proofs are given only for the case $d=1$. The work \cite{chigansky2011viterbi} considered two approaches to existence of the Viterbi process, one based on regeneration times which is reminiscent of ideas used in the discrete case, and one involving dynamic programming operators. 

The assumptions in the present work are most directly comparable to those used in the dynamic programming approach of \cite{chigansky2011viterbi}---we shall discuss this in more detail in section \ref{subsec:Comparison-to-the}---but our setup and proof techniques are quite different: we develop a new framework in which the infinite Viterbi alignment is an element of an infinite dimensional Hilbert space, $l_{2}(\gamma)$,
where $\gamma\in(0,1]$ is a parameter related to the rate of convergence to the infinite Viterbi alignment. This approach has several benefits. Firstly, it allows
interpretable quantitative bounds to be obtained which measure the distance to the infinite Viterbi alignment in a
norm on $l_{2}(\gamma)$ which gives a stronger notion of convergence
than the pointwise convergence in (\ref{eq:Viterbi}).  Secondly, via
a new ``decay-convexity'' condition our approach provides a new characterization of the Viterbi
process as the fixed point of an infinite dimensional ordinary differential
equation which arises in the limit $n\to\infty$. Thirdly, our analysis conveniently allows us to handle misspecified hidden Markov models: the situation in which the probability measure $\mathbf{P}_Y$ in \eqref{eq:Viterbi} need not be the probability law of $Y_0,Y_1,\ldots$ under the HMM specified by $\mu$, $f$, $g$, nor necessarily of the law of the observation process under any HMM. Misspecification is a topic of interest in the  study of asymptotic properties of maximum likelihood estimators \citep{douc2012asymptotic} and forgetting of initial conditions \citep{Douc2009}[Remark 2] for HMMs.  It is important in the present context because in practice there is usually some modelling error, and one computes maximum a-posteriori estimator as in \eqref{eq:optimization_problem} with little if any knowledge of the probability law of the observation process.

A shortcoming of the present work is that we do not investigate ergodic or regenerative properties of the Viterbi process, as conducted in the discrete case in, for example, \cite{caliebe2006properties}. We leave this as a topic for future research. Another restriction of our approach is that we consider only situations in which the infinite Viterbi alignment, when it exists, is unique, reflecting the fact that our conditions are related to convexity of $U_n(\cdot,y)$. The question of whether our approach can be extended to situations in which there is non-uniqueness is left for future research.

Existence of the infinite Viterbi alignment can  be understood as meaning that maximum a-posteriori state estimates have vanishing dependence on the observations in the distant future. From a practical point of view, this is important because it indicates that such future observations may be largely irrelevant to estimation at the present time and so can be safely ignored, resulting in computational savings. This idea can be extended to design a scheme for parallelized approximate computation of $(\xi_{n,0},\ldots,\xi_{n,n})$ to which we shall apply our quantitative convergence results.  

We start with definitions and Condition \ref{assu:hmm} concerning $\mu$, $f$, $g$ in section \ref{sec:defns_and_assumptions}, leading to the statements of Theorems \ref{thm:main_non_uniform} and \ref{thm:main_uniform}, which establish quantitative bounds on the differences between  maximum
a-posteriori estimators as a function of the observation sequence $y=(y_0,y_1,\ldots)$. Section \ref{sec:Discussion-and-application} maps out and demonstrates the two main steps involved in applying Theorem \ref{thm:main_non_uniform} or \ref{thm:main_uniform}, and the use of these theorems to establish the existence of the infinite Viterbi alignment. This is achieved via some intermediate results for classes of HMMs, combined with example calculations for specific HMMs to illustrate the reasoning involved.  Section \ref{sec:Discussion-and-application}  also contains discussion of how the  quantitative $l_{2}(\gamma)$  bounds can be used to analyze the error associated with the aforementioned parallelized approximate estimation scheme, and an application with numerical results for a model of neural population activity.

\section{Definitions}\label{sec:defns_and_assumptions}

With $d\in\mathbb{N}$ fixed throughout, when $x$ is a point in $\mathbb{R}^{\mathbb{N}}$, we associate with it the vectors $x_{0},x_{1},\ldots$,
each in $\mathbb{R}^{d}$, such that $x=[x_{0}^{\mathrm{T}}\,x_{1}^{\mathrm{T}}\,\ldots]^{\mathrm{T}}$. With $\left\langle \cdot,\cdot\right\rangle $ and $\|\cdot\|$ the
Euclidean inner product and norm on $\mathbb{R}^{d}$, define
the inner product and norm on $\mathbb{R}^{\mathbb{N}}$ associated
with a given $\gamma\in(0,1]$,
\[
\left\langle x,x^{\prime}\right\rangle _{\gamma}\coloneqq\sum_{n=0}^{\infty}\gamma^{n}\left\langle x_{n},x_{n}^{\prime}\right\rangle ,\qquad\|x\|_{\gamma}=\left\langle x,x\right\rangle _{\gamma}^{1/2}\coloneqq\left(\sum_{n=0}^{\infty}\gamma^{n}\|x_{n}\|^{2}\right)^{1/2}.
\]
Let $l_{2}(\gamma)$ be the Hilbert space consisting of the set $\{x\in\mathbb{R}^{\mathbb{N}}:\|x\|_{\gamma}<\infty\}$
equipped with the inner-product $\left\langle \cdot,\cdot\right\rangle _{\gamma}$
and the usual element-wise addition and scalar multiplication of vectors
over field $\mathbb{R}$. For each $n\geq 0$, $l_{2}^{n}(\gamma)$
denotes the subspace consisting of those $x\in l_{2}(\gamma)$ such
that $x_{m}=0$ for $m>n$, with the convention that $l_{2}^{\infty}(\gamma)\equiv l_{2}(\gamma)$.
Note that for $n<\infty$ the set of vectors $l_{2}^{n}(\gamma)$ does not actually depend on $\gamma$;  the notation $l_{2}^{n}(\gamma)$ is used as a reminder that $l_{2}^{n}(\gamma)$ is a subspace of $l_{2}(\gamma)$. Let $\|x\|_{\gamma,n}\coloneqq \left(\sum_{m=0}^{\infty}\gamma^{|m-n|}\|x_{m}\|^{2}\right)^{1/2}$. When $y\in\mathbb{Y}^{\mathbb{N}}$ we shall identify  $y=(y_{0},y_{1},\ldots)$ where each $y_n\in\mathbb{Y}$,  stopping short of explicitly regarding $y$ as a vector since for our main results we shall not need to assume that $\mathbb{Y}$ or $\mathbb{Y}^\mathbb{N}$ is a vector space.

For $x\in\mathbb{R}^{\mathbb{N}}$ and $y\in\mathbb{Y}^{\mathbb{N}}$ define
\begin{align}
\phi_n(x,y) & \coloneqq \log f(x_{n-1},x_{n})+\log f(x_{n},x_{n+1})+\log g(x_{n},y_{n}),\quad n\geq1,\label{eq:phi_n_defn}\\
\tilde{\phi}_n(x,y) &  \coloneqq \begin{cases}
\log\mu(x_{0})+\log f(x_{0},x_{1})+\log g(x_{0},y_{0}), & n=0,\\
\log f(x_{n-1},x_{n})+\log g(x_{n},y_{n}),\qquad & n\geq1.
\end{cases}\label{eq:phi_tilde_n_defn}
\end{align}
and for each $n\geq0$ let $\nabla_{n}\phi_n(x,y)$ and $\nabla_{n}\tilde{\phi}_n(x,y)$
be the vectors in $\mathbb{R}^{d}$ whose $i$th entries are the partial
derivatives of $\phi_n(x,y)$ and $\tilde{\phi}_n(x,y)$ with respect
to the $i$th entry of $x_{n}$ (the existence of such derivatives
is part of Condition \ref{assu:hmm} below).

For each $n\geq 0$, define the vector field $ \nabla U_n:\mathbb{R}^{\mathbb{N}}\times\mathbb{Y}^{\mathbb{N}}\to \mathbb{R}^{\mathbb{N}}$,
\begin{multline}
\nabla U_n(x,y)\coloneqq \\-[\nabla_{0}\tilde{\phi}_{0}(x,y)^{\mathrm{T}},\;\nabla_{1}\phi_{1}(x,y)^{\mathrm{T}},\;\cdots\;\nabla_{n-1}\phi_{n-1}(x,y)^{\mathrm{T}},\;\nabla_{n}\tilde{\phi}_n(x,y)^{\mathrm{T}},\;0,\;0,\;\cdots]^{\mathrm{T}}.\label{eq:gradU^N_defn}
\end{multline}
With these definitions, the first $d(n+1)$ elements of the vector
$\nabla U_n(x,y)$ are the partial derivatives of $U_n(x_{0},\ldots,x_{n},y_{0},\ldots,y_{n})$
with respect to $(x_{0},\ldots, x_{n})$,
whilst the other elements of the vector $\nabla U_n(x,y)$ are zero.
Treatment of  $\nabla U_n(x,y)$ as an infinitely long
vector allows us to consider
$(\nabla U_n(\cdot,y))_{n\geq 0}$ as a sequence of vector fields on the common Hilbert space
$l_{2}(\gamma)$. Define also
\begin{align}
\alpha_{\gamma,n}(y)\coloneqq\sum_{m=0}^{n}\gamma^{n-m}\beta_m(y), & \quad\beta_m(y)\coloneqq\|\nabla_{m}\phi_{m}(0,y)\|^{2}\vee\|\nabla_{m}\tilde{\phi}_{m}(0,y)\|^{2},\label{eq:alpha_and_beta_defns}
\end{align}
\begin{equation}
\eta_{n}(r,y)\coloneqq \sup_{\|x\|_{\gamma,n}^{2}\leq r}\|\nabla_{n}\phi_n(x,y)\|^{2}\vee\|\nabla_{n}\tilde{\phi}_n(x,y)\|^{2}.\label{eq:eta_defn}
\end{equation}

\section{Quantitative $l_{2}(\gamma)$ bounds for maximum a-posteriori estimators}\label{sec:quantitative_bounds}

\begin{condition}
\label{assu:hmm}$\,$

 a) $\mu$, $f$, and $g(\cdot,y)$ for all $y\in\mathbb{Y}$,
are everywhere strictly positive and continuously differentiable.

 b) there exist constants $\zeta,\tilde{\zeta},\theta$ such
that $0\leq\theta<\zeta/2 \wedge \tilde{\zeta}$, and for all $x,x^{\prime}\in\mathbb{R}^{\mathbb{N}}$ and all $y\in\mathbb{Y}^{\mathbb{N}}$,
\begin{align*}
 & \left\langle x_{n}-x_{n}^{\prime},\nabla_{n}\phi_n(x,y)-\nabla_{n}\phi_{n}(x^{\prime},y)\right\rangle \\
 & \leq-\zeta\|x_{n}-x_{n}^{\prime}\|^{2}+\theta\|x_{n}-x_{n}^{\prime}\|\left(\|x_{n-1}-x_{n-1}^{\prime}\|+\|x_{n+1}-x_{n+1}^{\prime}\|\right),\quad\forall n\geq1,\\
 & \left\langle x_{n}-x_{n}^{\prime},\nabla_{n}\tilde{\phi}_n(x,y)-\nabla_{n}\tilde{\phi}_{n}(x^{\prime},y)\right\rangle \\
 & \leq\begin{cases}
-\tilde{\zeta}\|x_{0}-x_{0}^{\prime}\|^{2}+\theta\|x_{0}-x_{0}^{\prime}\|\|x_{1}-x_{1}^{\prime}\|, & n=0,\\
-\tilde{\zeta}\|x_{n}-x_{n}^{\prime}\|^{2}+\theta\|x_{n}-x_{n}^{\prime}\|\|x_{n-1}^{\prime}-x_{n-1}^{\prime}\|, & \forall n\geq1.
\end{cases}
\end{align*}
\end{condition}

\begin{theorem}
\label{thm:main_non_uniform}Assume that Condition \ref{assu:hmm}
holds, and with $\zeta,\tilde{\zeta},\theta$ as therein, let $\gamma$
be any value in $(0,1]$ such that:
\begin{equation}
\zeta>\theta\frac{(1+\gamma)^{2}}{2\gamma},\qquad\tilde{\zeta}>\theta\frac{\left(1+\gamma\right)}{2\gamma}.\label{eq:theta_gamma_inequality}
\end{equation}
Then with any $\lambda$ such that:
\begin{equation}
0<\lambda\leq\left\{ \zeta-\theta\frac{(1+\gamma)^{2}}{2\gamma}\right\} \wedge\left\{ \tilde{\zeta}-\theta\frac{\left(1+\gamma\right)}{2\gamma}\right\} ,\label{eq:lambda_conditions}
\end{equation}
and any $n\geq 0$,
\begin{equation}
\left\langle x-x^{\prime},\nabla U_n(x,y)-\nabla U_n(x^{\prime},y)\right\rangle _{\gamma}\geq\lambda\|x-x^{\prime}\|_{\gamma}^{2},\quad \forall x,x^{\prime}\in l_{2}^{n}(\gamma),y\in\mathbb{Y}^{\mathbb{N}}.\label{eq:U^n_decay_convexity}
\end{equation}
For any $y\in\mathbb{Y}^{\mathbb{N}}$, amongst all the vectors in $l_{2}^{n}(\gamma)$ there is a unique
vector $\xi_{n}$ such that $\nabla U_n(\xi_{n},y)=0$, and
\begin{multline}
\sup_{m\in\mathbb{N}_{0},m\geq n}\|\xi_{n}-\xi_{m}\|_{\gamma}^{2}\leq \frac{ \gamma^{n}}{\lambda^{2}}\eta_{n}\left(\lambda^{-2}\alpha_{\gamma,n}(y),y\right)\\+\frac{\gamma^{n+1}}{\lambda^{2}}\eta_{n+1}\left(\gamma\lambda^{-2}\alpha_{\gamma,n}(y),y\right)+\frac{1}{\lambda^{2}}\sum_{k=n+2}^{\infty}\gamma^{k}\beta_{k}(y).\label{eq:thm_1_bound}
\end{multline}
\end{theorem}
\noindent The proof of Theorem \ref{thm:main_non_uniform} is in appendix
\ref{subsec:Proofs-of-the}. We pass several remarks on this result.
\begin{itemize}
\item To connect Theorem \ref{thm:main_non_uniform}  with the setup from section \ref{subsec:Background-and-motivation}, note that since $\nabla U_n(\xi_{n},y)=0$, the first $d(n+1)$ elements of
the vector $\xi_{n}$ solve the estimation problem \eqref{eq:optimization_problem},
and since $\xi_{n}\in l_{2}^{n}(\gamma)$, the remaining elements
of $\xi_{n}$ are zero. Note that the dependence of $\xi_n$ on $y$ is not shown in the notation. 
\item When Condition \ref{assu:hmm} holds, there always exists $\gamma\in(0,1)$
satisfying (\ref{eq:theta_gamma_inequality}) and $\lambda$ satisfying
(\ref{eq:lambda_conditions}) because $\lim_{\gamma\to 1} (1+\gamma)^{2}/2\gamma=\lim_{\gamma\to 1} (1+\gamma)/\gamma = 2$
and Condition \ref{assu:hmm} requires $0\leq\theta<\zeta/2 \wedge\tilde{\zeta}$.
\item If the right hand
side of (\ref{eq:thm_1_bound}) converges to zero as $n\to\infty$,
then $(\xi_{n})_{n\geq 0}$ is a Cauchy sequence in $l_{2}(\gamma)$. Whether or not this convergence occurs depends on value of $\gamma$, the ingredients $\mu$, $f$ and $g$ of the HMM and the $y$ in question, and we shall return this topic in section \ref{sec:Discussion-and-application}.
\end{itemize}
From hereon (\ref{eq:U^n_decay_convexity}) will be referred to as
``decay-convexity'' of $U_n$. Note that when $\gamma=1$, (\ref{eq:U^n_decay_convexity})
says exactly that $p(x_{0},\ldots,x_{n}|y_{0},\ldots,y_{n})$
is $\lambda$-strongly log-concave in $(x_{0},\ldots,x_{n})$ in the sense of \cite{saumard2014log}, which guarantees that $p(x_{0},\ldots,x_{n}|y_{0},\ldots,y_{n})$ has a unique maximiser. This begs the question, if $(\xi_{n,0},\cdots,\xi_{n,n})$ is a maximiser of $p(x_{0},\ldots,x_{n}|y_{0},\ldots,y_{n})$, then can $\xi_\infty$ be characterized as a maximiser of some function?  As we shall see next, the answer is to this question appears to be ``no'', but an alternative and closely related interpretation of $\xi_\infty$ is available if one introduces the vector field $ \partial U:\mathbb{R}^{\mathbb{N}}\times \mathbb{Y}^{\mathbb{N}}\to \mathbb{R}^{\mathbb{N}}$, 
\begin{equation}
\partial U(x,y)\coloneqq-[\nabla_{0}\tilde{\phi}_{0}(x,y)^{\mathrm{T}},\;\nabla_{1}\phi_{1}(x,y)^{\mathrm{T}},\;\nabla_{2}\phi_{2}(x,y)^{\mathrm{T}},\;\cdots]^{\mathrm{T}}.\label{eq:grad_U_infty_defn}
\end{equation}
\begin{condition}
\label{assu:hmm2}$\,$
For some given $y\in\mathbb{Y}^{\mathbb{N}}$,

\noindent a) there exists a finite constant $\chi$ such that for
all $n$ and $x\in l_{2}(\gamma)$,
\[
\|\nabla_{n}\phi_n(x,y)\|^{2}\vee\|\nabla_{n}\tilde{\phi}_n(x,y)\|^{2}\leq\beta_n(y)+\chi\left(\|x_{n-1}\|^{2}+\|x_{n}\|^{2}+\|x_{n+1}\|^{2}\right),
\]
b) $\sum_{n=0}^{\infty}\gamma^{n}\beta_n(y)<\infty$,

\noindent c) $x\mapsto \partial U(x,y)$ is continuous in $l_{2}(\gamma)$.
\end{condition}
Theorem \ref{thm:main_uniform} below shows that, under the assumptions of Theorem \ref{thm:main_non_uniform} and additionally Condition \ref{assu:hmm2}, $\xi_\infty$ is the fixed point of an infinite dimensional ODE, associated with the vector field $\partial U(\cdot,y)$. It is important to note here that 
the vector $\partial U(x,y)\in\mathbb{R}^{\mathbb{N}}$ is the element-wise limit as $n\to\infty$ of
the vector $\nabla U_n(x,y)$. Indeed it can be read off from (\ref{eq:gradU^N_defn})
that for any given $x$ and $y$, each element of the vector $\nabla U_n(x,y)$ is constant in
$n$ for all $n$ large enough.  However, in general for some given $x,y$, the limit $\lim_{n\to\infty}U_n(x,y)$ does not exist, so it is a characterization of $\xi_\infty$, as a fixed point of an ODE, rather than a maximiser of some function which is fruitful.

%in the case $n=\infty$.
%This
%reflects the fact that on an infinite time horizon, the prior and
%posterior probability measures over the entire state sequence $(X_{n})_{n\geq 0}$
%are typically singular, so that a probability density ``$p(x_0, x_1,\ldots|y_0,y_1,\ldots)$''
%does not exist. Hence there is no sense in characterizing the Viterbi
%process as: ``$\xi_{\infty}=\argmax_{x_0, x_1,\ldots} p(x_0, x_1,\ldots| y_0,y_1,\ldots)$'',
%a correct characterization is: $\partial U(\xi_{\infty})=0$,
%as Theorem \ref{thm:main_uniform} shows via a counter-part of (\ref{eq:finite_dim_ODE})-(\ref{eq:exposition_gradient})
%in the case $n=\infty$.

\begin{theorem}
\noindent \label{thm:main_uniform}In addition to the assumptions
of Theorem \ref{thm:main_non_uniform} and with $\gamma$ as therein,
assume that for some given $y\in\mathbb{Y}^{\mathbb{N}}$ Condition \ref{assu:hmm2} holds. Then with $\lambda$ as in Theorem \ref{thm:main_non_uniform},
\begin{equation}
\left\langle x-x^{\prime},\partial U(x,y)-\partial U(x^{\prime},y)\right\rangle _{\gamma}\geq\lambda\|x-x^{\prime}\|_{\gamma}^{2},\qquad\text{for all }x,x^{\prime}\in l_{2}(\gamma)\label{eq:mod_convex_inf}.
\end{equation}
There exists a globally defined and unique flow $\Phi:(t,x)\in\mathbb{R}_{+}\times l_{2}(\gamma)\mapsto\Phi(t,x)\in l_{2}(\gamma)$
which solves the Fr\'echet ordinary differential equation,
\begin{equation}
\frac{\mathrm{d}}{\mathrm{d}t}\Phi(t,x)=-\partial U(\Phi(t,x),y),\qquad \Phi(0,x)=x,\label{eq:ode_inf}
\end{equation}
this flow has a unique fixed point, $\xi_{\infty}\in l_{2}(\gamma)$, and this point satisfies $\partial U(\xi_{\infty},y)=0$. With $(\xi_{n})_{n\geq 0}$ as in Theorem \ref{thm:main_non_uniform},
\begin{equation}
\sup_{m\in\mathbb{N}_{0}\cup\{\infty\},m\geq n}\|\xi_{n}-\xi_{m}\|_{\gamma}^{2}\leq\frac{1}{\lambda^{2}}\left(\gamma^{n-1}\alpha_{\gamma,n}(y)\frac{2\chi}{\lambda^{2}}+\sum_{k=n}^{\infty}\gamma^{k}\beta_{k}(y)\right),\label{eq:thm_2_bound}
\end{equation}
and $\|\xi_n-\xi_\infty\|_\gamma\to 0 $ as $n\to\infty$.
\end{theorem}
\noindent The proof of Theorem \ref{thm:main_uniform} is in appendix
\ref{subsec:Proofs-of-the}. In summary, the assumptions a)-b) of Condition \ref{assu:hmm2}  ensure that
$\partial U(\cdot,y)$ maps $l_{2}(\gamma)$ to itself. Combined with
the continuity in assumption c) in Condition \ref{assu:hmm2} and (\ref{eq:mod_convex_inf}), this
allows an existence and uniqueness result of \cite{deimling2006ordinary}
for dissipative ordinary differential equations on Banach spaces to
be applied. It is
from here that the Fr\'echet derivative (\ref{eq:ode_inf}) arises.
Background information about Fr\'echet derivatives is given in appendix
\ref{subsec:Fr=0000E9chet-derivatives}.

\section{Discussion}\label{sec:Discussion-and-application}

\subsection{A roadmap for applying Theorems \ref{thm:main_non_uniform}
and \ref{thm:main_uniform}}

Our next objective is to set out and demonstrate mathematical tools which enable application of Theorems \ref{thm:main_non_uniform}
or \ref{thm:main_uniform}. For Theorem \ref{thm:main_non_uniform} there are two main tasks in such an application: firstly verifying Condition \ref{assu:hmm}  and secondly proving that the right-hand side of \eqref{eq:thm_1_bound}  converges to zero as $n\to\infty$. Addressing these two tasks separately allows us to accommodate naturally the case of misspecified HMMs, in which the observation sequence $y=(y_{n})_{n\geq 0}$ appearing in \eqref{eq:thm_1_bound} need not be a realization from the HMM specified by $\mu$, $f$, $g$, nor in fact any HMM. In sections \ref{subsec:verifying} and \ref{sec:behaviour_of_alpha_and_beta} we give intermediate and  generic results to help tackle these two main tasks for classes of HMMs, with calculations for a specific example in section \ref{sec:heavy_tailed}. 

In order to apply Theorem \ref{thm:main_uniform}, one must additionally verify Condition \ref{assu:hmm2}, and calculations for a specific example are given in \ref{sec:non_stationary}. In section \ref{sec:parallel} we address the use of Theorem \ref{thm:main_uniform} to quantify the error associated with parallelized approximation to MAP estimators, demonstrated with numerical results in the context of a model of neural population activity in section \ref{subsec:neural_model}.

\subsection{Verifying Condition \ref{assu:hmm}}\label{subsec:verifying}
Lemma \ref{lem:AR1} focuses on a class of models with linear and Gaussian signals together with conditional distributions of the observations given signals which are not necessarily log-concave as a function of the signal. By contrast, Lemma \ref{lem:non_gauss} addresses a class of nonlinear, non-Gaussian signal models, combined with conditional distributions of observations given signal which are strongly log-concave. Here the minimum and maximum eigenvalues of a real, symmetric matrix, say $B$, are
denoted $\rho_{\mathrm{min}}(B)$, $\rho_{\mathrm{max}}(B)$.
\begin{lemma}
\label{lem:AR1}Assume a) and b):

\noindent a) The signal satisfies the following vector autoregressive model:
\begin{equation}
X_{n}=AX_{n-1}+b+W_{n},\label{eq:linear_gauss_signal}
\end{equation}
where for $n\in\mathbb{N}$, $W_{n}\sim\mathcal{N}(0,\Sigma)$ is
independent of other random variables, $X_{0}\sim\mathcal{N}(b_{0},\Sigma_{0})$
, $\Sigma$ and $\Sigma_{0}$ are positive definite, $A$ is a $d\times d$
matrix and $b$ and $b_0$ are length-$d$ vectors.

\noindent b) For each $y\in\mathbb{Y}$, $g(\cdot,y)$
is strictly positive, continuously differentiable and there exists
$\lambda_{g}\in\mathbb{R}$ such that for all $x,x^\prime\in\mathbb{R}^d$, $y\in\mathbb{Y}$,
\begin{equation}
\left\langle x-x^{\prime},\nabla_{x}\log g(x,y)-\nabla_{x}\log g(x^{\prime},y)\right\rangle \leq\lambda_{g}\|x-x^{\prime}\|^{2}.\label{eq:semi_log_concavity_g}
\end{equation}

If the inequality $\theta<\zeta/2 \wedge\tilde{\zeta}$ is satisfied by:
\begin{align}
\zeta & =\frac{1+\rho_{\min}(A^{\mathrm{T}}A)}{\rho_{\mathrm{max}}(\Sigma)}-\lambda_{g},\label{eq:example_zeta}\\
\tilde{\zeta} & =\frac{1}{\rho_{\mathrm{max}}(\Sigma)}\wedge\left\{ \frac{1}{\rho_{\mathrm{max}}(\Sigma_{0})}+\frac{\rho_{\min}(A^{\mathrm{T}}A)}{\rho_{\mathrm{max}}(\Sigma)}\right\} -\lambda_{g},\label{eq:example_zeta_tilde}\\
\theta & =\frac{\rho_{\mathrm{max}}(A^{\mathrm{T}}A)^{1/2}}{\rho_{\min}(\Sigma)},\label{eq:example_theta}
\end{align}
then Condition \ref{assu:hmm} holds.
\end{lemma}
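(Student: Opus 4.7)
The proof is essentially a verification, so the plan is to unpack Condition \ref{assu:hmm} and check each piece by direct calculation from the linear-Gaussian structure of $\mu$ and $f$.

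First I would dispatch Condition \ref{assu:hmm}a). Strict positivity and continuous differentiability of $\mu$ and $f$ are immediate from the Gaussian form: $\mu(x_0) \propto \exp\{-\tfrac{1}{2}(x_0-b_0)^{\mathrm{T}}\Sigma_0^{-1}(x_0-b_0)\}$ and $f(x_{n-1},x_n) \propto \exp\{-\tfrac{1}{2}(x_n-Ax_{n-1}-b)^{\mathrm{T}}\Sigma^{-1}(x_n-Ax_{n-1}-b)\}$. Positivity and $C^1$-smoothness of $g(\cdot,y_n^\star)$ are assumed in b).

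Next I would compute $\nabla_n\phi_n$ and $\nabla_n\tilde\phi_n$ explicitly. Direct differentiation of the Gaussian log-densities gives, for $n\geq 1$,
\[
\nabla_n\phi_n(x) = -(\Sigma^{-1}+A^{\mathrm{T}}\Sigma^{-1}A)x_n + \Sigma^{-1}Ax_{n-1} + A^{\mathrm{T}}\Sigma^{-1}x_{n+1} + c_n + \nabla_n\log g(x_n,y_n^\star),
\]
for some constant vector $c_n$, and similarly
\[
\nabla_0\tilde\phi_0(x) = -(\Sigma_0^{-1}+A^{\mathrm{T}}\Sigma^{-1}A)x_0 + A^{\mathrm{T}}\Sigma^{-1}x_1 + \tilde c_0 + \nabla_0\log g(x_0,y_0^\star),
\]
while $\nabla_n\tilde\phi_n(x) = -\Sigma^{-1}x_n+\Sigma^{-1}Ax_{n-1}+\tilde c_n+\nabla_n\log g(x_n,y_n^\star)$ for $n\geq1$. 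Forming the differences in $x,x'$ kills the constants, so each quantity $\langle x_n-x_n',\nabla_n\phi_n(x)-\nabla_n\phi_n(x')\rangle$ reduces to a quadratic plus a $g$-contribution.

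The core estimates are then the symmetric-matrix bounds
\[
\Sigma^{-1}+A^{\mathrm{T}}\Sigma^{-1}A \;\succeq\; \frac{1}{\rho_{\max}(\Sigma)}\bigl(I+A^{\mathrm{T}}A\bigr), \qquad \Sigma_0^{-1}+A^{\mathrm{T}}\Sigma^{-1}A \;\succeq\; \frac{1}{\rho_{\max}(\Sigma_0)}I + \frac{\rho_{\min}(A^{\mathrm{T}}A)}{\rho_{\max}(\Sigma)}I,
\]
which give the diagonal dissipation with coefficients matching $\zeta+\lambda_g$ and $\tilde\zeta+\lambda_g$ in \eqref{eq:example_zeta}--\eqref{eq:example_zeta_tilde}. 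The cross terms are handled by Cauchy--Schwarz together with the operator norm bound $\|\Sigma^{-1}A\|_{\mathrm{op}} = \|A^{\mathrm{T}}\Sigma^{-1}\|_{\mathrm{op}} \leq \rho_{\min}(\Sigma)^{-1}\rho_{\max}(A^{\mathrm{T}}A)^{1/2}$, producing exactly the $\theta$ in \eqref{eq:example_theta}. The $g$-contribution is absorbed using \eqref{eq:semi_log_concavity_g}, which subtracts $\lambda_g\|x_n-x_n'\|^2$ from the diagonal coefficient and hence removes the $-\lambda_g$ offset in \eqref{eq:example_zeta} and \eqref{eq:example_zeta_tilde}. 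Collecting the three bounds (for $\phi_n$, for $\tilde\phi_0$, and for $\tilde\phi_n$ with $n\geq 1$) and taking the minimum of the two $\tilde\zeta$ candidates yields Condition \ref{assu:hmm}b) with the stated constants.

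No step presents a genuine obstacle; the only place requiring a bit of care is the lower bound on $\Sigma^{-1}+A^{\mathrm{T}}\Sigma^{-1}A$, where one must argue that $A^{\mathrm{T}}\Sigma^{-1}A \succeq \rho_{\max}(\Sigma)^{-1}A^{\mathrm{T}}A$ (which follows from $\Sigma^{-1}\succeq \rho_{\max}(\Sigma)^{-1}I$ and conjugation by $A$) and then invoke $\rho_{\min}(I+A^{\mathrm{T}}A)=1+\rho_{\min}(A^{\mathrm{T}}A)$. The final check is simply that the hypothesis $\theta<\zeta/2\wedge\tilde\zeta$ assumed in the statement coincides with the inequality in Condition \ref{assu:hmm}b).
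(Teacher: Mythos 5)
Your proposal is correct and follows essentially the same route as the paper: compute the gradients of the Gaussian log-densities explicitly, bound the diagonal quadratic forms via $\rho_{\min}(\Sigma^{-1}+A^{\mathrm{T}}\Sigma^{-1}A)\geq\rho_{\max}(\Sigma)^{-1}\{1+\rho_{\min}(A^{\mathrm{T}}A)\}$ and its $\Sigma_0$ analogue, bound the off-diagonal terms by the operator norm of $A^{\mathrm{T}}\Sigma^{-1}$, and absorb the likelihood contribution via the semi-log-concavity inequality. The only cosmetic difference is that you factor out $\rho_{\max}(\Sigma)^{-1}$ before taking the minimum eigenvalue of $I+A^{\mathrm{T}}A$, whereas the paper bounds the two summands separately; the resulting constants are identical.
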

\noindent The proof is in section appendix \ref{sec:discussion_proofs}. We pass the following remarks on this result:
\begin{itemize}
\item The condition (\ref{eq:semi_log_concavity_g}) is called semi-log-concavity
of $x\mapsto g(x,y)$, generalizing log-concavity by allowing
$\lambda_{g}\in\mathbb{R}$, rather than only $\lambda_{g}\leq0$. 
\item The condition $\theta<\zeta/2\wedge\tilde{\zeta}$ can be interpreted
as balancing the magnitude of temporal correlation in (\ref{eq:linear_gauss_signal})
against the fluctuations of $W_{n}$ and the degree to
which the mapping $x\mapsto g(x,y)$ is informative
about $x$. As $\lambda_{g}\to-\infty$ the mapping $x\mapsto g(x,y)$
becomes more strongly log-concave, and by inspection of (\ref{eq:example_zeta})-(\ref{eq:example_theta})
the condition $\theta<\zeta/2\wedge\tilde{\zeta}$ can always be achieved
if $\lambda_{g}$ takes a negative value large enough in magnitude, with other
quantities on the right of the equations (\ref{eq:example_zeta})-(\ref{eq:example_theta})
held constant.  On the other hand, if $\rho_{\mathrm{max}}(\Sigma)^{-1}\wedge\rho_{\mathrm{max}}(\Sigma_{0})^{-1}>\lambda_{g}$,
which implies $\zeta\wedge\tilde{\zeta}>0$ for any value of $\rho_{\min}(A^{\mathrm{T}}A)$,
the condition $\theta<\zeta/2\wedge\tilde{\zeta}$ can be achieved if
$\rho_{\mathrm{max}}(A^{\mathrm{T}}A)$ is small enough.

%\begin{remark}\label{rem:dimension_free}

\item The fact that $\zeta$, $\tilde{\zeta}$ and $\theta$ in (\ref{eq:example_zeta})-(\ref{eq:example_theta})
depend only on eigenvalues of $A$, $\Sigma$ and $\Sigma_{0}$ and
the semi-concavity parameter $\lambda_{g}$ means they, and consequently
$\lambda$ and $\gamma$, do not necessarily depend on dimension.
As a simple example consider the case: $\lambda_{g}\leq0$, $A=aI_{d}$
and $\Sigma=\sigma^{2}I_{d}$, with $|a|<1$ and $\sigma^{2}>0$.
In this situation $\theta<\zeta/2\wedge\tilde{\zeta}$ holds, and $\gamma$ and $\lambda$ can be chosen to depend only on $|a|$ and $\sigma^{2}$ and be such that  (\ref{eq:lambda_conditions}) holds.
\end{itemize}

%\end{remark}

%\begin{remark}
%If $x\mapsto g(x,y)$ is sufficiently
%strongly log-concave, that is if (\ref{eq:semi_log_concavity_g})
%holds with a negative value of $\lambda_g$ which is sufficiently large in magnitude, neither Gaussianity nor even log-concavity
%of the distribution of $W_{n}$, nor linearity of the evolution equation
%(\ref{eq:linear_gauss_signal}) is necessary for Condition \ref{assu:hmm}
%to hold, as illustrated by the following lemma.
%\end{remark}
\begin{lemma}\label{lem:non_gauss}
Assume:

\noindent a) the signal satisfies
\begin{equation}
X_{n}=A(X_{n-1})+W_{n},\label{eq:linear_gauss_signal-1}
\end{equation}
where for $n\in\mathbb{N}$, $W_{n}$ is independent of other random
variables and has density proportional to $e^{-\psi(w)}$, and $\mu(x)\propto e^{-\psi_{0}(x)}$,
where $\psi,\psi_{0}:\mathbb{R}^{d}\to\mathbb{R}$ are continuously
differentiable and have bounded, Lipschitz gradients in the sense that:

\noindent
\begin{align*}
 & \sup_{w}\|\nabla\psi_{0}(w)\|\vee\|\nabla\psi(w)\|\leq L_{\psi},\\
 & \|\nabla\psi(w)-\nabla\psi(w^{\prime})\|\vee\|\nabla\psi_{0}(w)-\nabla\psi_{0}(w^{\prime})\|\leq L_{\nabla\psi}\|w-w^{\prime}\|,
\end{align*}
$A$ is continuously differentiable, and has bounded, Lipschitz gradient
in the sense that:
\[
\sup_{x}\|\nabla A(x)\|_{\mathrm{op}}\leq L_{A},\quad\|\nabla A(x)-\nabla A(x^{\prime})\|_{\mathrm{op}}\leq L_{\nabla A}\|x-x^{\prime}\|,
\]
where $\nabla A(x)$ is the Jacobian matrix of $x\mapsto A(x)$, and
$\|\cdot\|_{\mathrm{op}}$ is the Euclidean operator norm.

\noindent b) for each $y\in\mathbb{Y}$, $x\mapsto g(x,y)$ is strictly
positive, continuously differentiable and there exists $\lambda_{g}<0$
such that for all $x,x^\prime\in\mathbb{R}^d$, $y\in\mathbb{Y}$,
\begin{equation}
\left\langle x-x^{\prime},\nabla_{x}\log g(x,y)-\nabla_{x}\log g(x^{\prime},y)\right\rangle \leq\lambda_{g}\|x-x^{\prime}\|^{2}.\label{eq:semi_log_concavity_g-1}
\end{equation}
If $\theta<\zeta/2\wedge\tilde{\zeta}$ is satisfied with:
\begin{align*}
\zeta & =\tilde{\zeta}=-(L_{\nabla\psi}+L_{A}^{2}L_{\nabla\psi}+L_{\psi}L_{\nabla A})-\lambda_{g},\\
\theta & =L_{\nabla\psi}L_{A},
\end{align*}
then Condition \ref{assu:hmm} holds.
\end{lemma}
The proof is in section appendix \ref{sec:discussion_proofs}.

\subsection{Behaviour of $\alpha_{\gamma,n}(y)$ and $\beta_n(y)$ as $n\to\infty$}\label{sec:behaviour_of_alpha_and_beta}
Lemma \ref{lem:poly_moment_growth} and Lemma \ref{lem:alpha_and_beta} are intermediate results which will be applied to bound the right hand sides of 
\eqref{eq:thm_1_bound} and \eqref{eq:thm_2_bound}. The proofs of these two lemmas are in appendix \ref{sec:discussion_proofs}.

\begin{lemma}\label{lem:poly_moment_growth}
For any nonnegative random variables $(Z_{n})_{n\geq 0}$ , if there exists $p>0$ and $s>0$ such that
$$
\sup_{n\geq0}\frac{\mathbf{E}[Z_n^s]}{(n+1)^p}<\infty,
$$
then for any $\rho\in(0,1)$,
\[
\sup_{n\geq0}\rho^{n}Z_{n}<\infty,\quad a.s.
\]
\end{lemma}

\begin{lemma}\label{lem:alpha_and_beta}
If $Y=(Y_n)_{n\geq 0}$ is any $\mathbb{Y}$-valued stochastic process with law denoted $\mathbf{P}_Y$ such that for some $s\in(0,1]$ and $p>0$
$$
\sup_{n\geq0} \frac{1}{(n+1)^p} \mathbf{E}\left[ \|\left.\nabla_{x}\log g(x,Y_{n})\right|_{x=0}\|^{2s}\right] <\infty,
$$
then for any $\rho\in(0,1)$ and any $\gamma >0$,
$$
\sup_{n\geq 0 } \rho^n \alpha_{\gamma,n}(y)<\infty\quad\text{and}\quad \sup_{n\geq 0 } \rho^n \beta_{n}(y)<\infty,\quad \text{for}\;\mathbf{P}_Y\text{-almost all }y.
$$
For any $\tilde{\rho}\in (\rho,1)$, there exist $C(y)$ and $D(y)$ such that $C(y)\vee D(y)<\infty$ for $\mathbf{P}_Y$-almost all $y$, and 
$$
\rho^n  \alpha_{\gamma,n}(y) \leq \tilde{\rho}^n C(y),\quad\text{and}\quad \sum_{k=n}^{\infty} \rho^k \beta_{k}(y) \leq \tilde{\rho}^n D(y),\quad   \text{for all } n \text{ and }\mathbf{P}_Y \text{-almost all }y.
$$
\end{lemma}

\subsection{An example with a Gaussian signal and heavy-tailed observations}\label{sec:heavy_tailed}

In this section we  consider an instance of the model class addressed in Lemma \ref{lem:AR1} and the application of Theorem \ref{thm:main_non_uniform} to it.  With $d=1$ and $\mathbb{Y}=\mathbb{R}$ we consider
\begin{equation}
X_n = A X_{n-1} +W_n,\qquad Y_n = X_n + V_n,\label{eq:students_t_hmm}
\end{equation}
where $A\in\mathbb{R}$, $|A|<1$, $W_n\sim \mathcal{N}(0,1)$, and $V_n$ follows a Student's-$t$ distribution with one degree of freedom:
\begin{equation}
g(x,y)=\pi^{-1/2}\{1+(y-x)^{2}\}^{-1}\label{eq:student_t}.
\end{equation} 
The mapping $x\mapsto g(x,y)$ is not log-concave,  so (\ref{eq:semi_log_concavity_g}) is not satisfied with any $\lambda_{g}\leq0$, but it is satisfied for some 
 $\lambda_{g}>0$. Indeed we have:
$$
\frac{\partial}{\partial x} \log g(x,y) = \frac{-2(x-y)}{1+(x-y)^2},\qquad \frac{\partial^2}{\partial x^2} \log g(x,y)= -2\frac{1-2(x-y)^2}{[1+(x-y)^2]^2},
$$
from which follows that $\frac{\partial^2}{\partial x^2} \log g(x,y) \leq 2$, uniformly in $x$ and $y$, and in turn that (\ref{eq:semi_log_concavity_g}) holds with $\lambda_g \coloneqq 2$.

For \eqref{eq:students_t_hmm} with $d=1$, the matrix $A$ in the signal model \eqref{eq:linear_gauss_signal} is reduced to a single real-valued number and $\rho_{\mathrm{max}}(A^{\mathrm{T}}A)=A^2$. Thus in accordance with the discussion following Lemma \ref{lem:AR1}, if we take the scalar $|A|<1$ small enough, then $\theta<\zeta/2\wedge\tilde{\zeta}$ is achieved and thus by Lemma \ref{lem:AR1}, Condition \ref{assu:hmm} holds and Theorem \ref{thm:main_non_uniform} may be applied. 

Our next step is to obtain a simplified bound for the right-hand side of the bound \eqref{eq:thm_1_bound} from Theorem \ref{thm:main_non_uniform}. By direct calculations we find:
%\begin{equation}
%\beta_n(y)=\left[\left.\frac{\partial}{\partial x} \log g(x,y)\right|_{x=0}\right]^2\leq 4|y_n|^2,\qquad  \alpha_{\gamma,n}(y) \leq 4\sum_{m=0}^n\gamma^{n-m}|y_m|^2,%\label{eq:students_t_example_beta_alpha}
%\end{equation}
\begin{equation}
\left[\left.\frac{\partial}{\partial x} \log g(x,y)\right|_{x=0}\right]^2\leq 4|y_n|^2,\label{eq:students_t_example_beta_alpha}
\end{equation}
there exists a finite constant $\chi$ such that:
$$
|\nabla_n\phi_n(x)|^2 \vee|\nabla_n\tilde{\phi}_n(x)|^2 \leq \chi (|x_{n-1}|^2 +|x_{n}|^2 + |x_{n+1}|^2) + 4|y_n|^2
$$
and
$$\beta_n(y)\leq4|y_n|^2,\qquad \eta_n(r,y)\leq \frac{r}{\gamma} + 4|y_n|^2.$$
From \eqref{eq:thm_1_bound} we obtain:
\begin{multline}
\sup_{m\in\mathbb{N}_{0},m\geq n}\|\xi_{n}-\xi_{m}\|_{\gamma}^{2}\leq \frac{ \gamma^{n}}{\lambda^{2}}
\left(\chi\lambda^{-2}\frac{\alpha_{\gamma,n}(y)}{\gamma} + 4|y_n|^2\right)
\\+\frac{\gamma^{n+1}}{\lambda^2}\left(\chi\lambda^{-2}\alpha_{\gamma,n}(y) + 4|y_{n+1}|^2\right)
+\frac{1}{\lambda^{2}}\sum_{k=n+2}^{\infty}\gamma^{k}\beta_k(y).\label{eq:student_t_example_bound1}
\end{multline}

We now seek assumptions on the observations under which the right hand side of \eqref{eq:student_t_example_bound1} converges to zero as $n\to\infty$, in which case $(\xi_n)_{n\geq0}$ is a Cauchy sequence. We first consider the case of a well-specified model,  where $\mathbf{P}_Y$ is the law of the random variables $(Y_n)_{n\geq0}$ distributed according to  \eqref{eq:students_t_hmm}. We approach this using Lemma \ref{lem:alpha_and_beta} and Lemma \ref{lem:poly_moment_growth}.

From \eqref{eq:students_t_hmm}, using  $|A|<1$ and the distributional assumptions on $(W_n)_{n\geq 0}$ and $(V_n)_{n\geq 0}$, we have for any $s\in(0,1/2)$, $(|a|+|b|)^{2s}\leq |a|^{2s} + |b|^{2s}$,
$$
\sup_{n\geq 0}\mathbf{E}[|Y_n|^{2s}] \leq \sup_{n\geq 0}\mathbf{E}[|X_n|^{2s}] + \mathbf{E}[|V_0|^{2s}] <\infty,
$$
and using \eqref{eq:students_t_example_beta_alpha}
\begin{equation}
\sup_{n\geq 0} \mathbf{E}\left[\left(\left.\frac{\partial}{\partial x} \log g(x,Y_n)\right|_{x=0}\right)^{2s}\right] \leq 4^s \sup_{n\geq0}\mathbf{E}[|Y_n|^{2s}]<\infty.\label{eq:sup_Exp_students_t_example}
\end{equation}
Having established \eqref{eq:sup_Exp_students_t_example} we may apply Lemma \ref{lem:alpha_and_beta} to bound $\alpha_{\gamma,n}(y)$ and $\sum_{k\geq{n+1}}\gamma^k \beta_{k}(y)$  and Lemma \ref{lem:poly_moment_growth} with there $Z_n=|Y_n|^{2}$ to bound $\gamma^n |Y_n|^{2}$. We thus find that for any $\tilde{\gamma}\in(\gamma,1)$ there exists $C(y)$ such that $C(y)<\infty$ for $\mathbf{P}_Y$-almost all $y$, and the right hand side of  \eqref{eq:student_t_example_bound1} is   bounded by $C(y)\tilde{\gamma}^n$  for $\mathbf{P}_Y$-almost all $y$. This implies that there exists a set in $\mathbb{Y}^\star_\gamma\subset\mathbb{Y}^{\mathbb{N}}$ such that  under \eqref{eq:students_t_hmm}, $\mathbf{P}_Y(\mathbb{Y}^\star_\gamma)=1$, for any $y\in\mathcal{Y}$, the right hand side of \eqref{eq:student_t_example_bound1} converges to zero as $n\to\infty$, and $\xi_n$ is thus a Cauchy sequence.

Concerning the misspecified case, suppose that $Y=(Y_n)_{n\geq0}$ does not necessarily follow \eqref{eq:students_t_hmm} but rather is any $\mathbb{R}$-valued stochastic process, with law denoted $\mathbf{P}_Y$, such that for some $p>0$ and $s>0$:
$$
\sup_{n\geq0} \frac{\mathbf{E}[|Y_n|^{2s}]}{(1+n)^p}<\infty.
$$
Then by Lemma \ref{lem:poly_moment_growth}, $\sup_{n\geq0}\gamma^n |y_n|^2 < \infty$, for $\mathbf{P}_Y$-almost all $y$; via \eqref{eq:students_t_example_beta_alpha}, 
$$
\sup_{n\geq 0} \mathbf{E}\left[\left(\left.\frac{\partial}{\partial x} \log g(x,Y_n)\right|_{x=0}\right)^{2s}\right] < \infty,
$$
and Lemma \ref{lem:alpha_and_beta} can be applied to again show that the right hand side of  \eqref{eq:student_t_example_bound1} is a  bounded by $C(y)\tilde{\gamma}^n$ for all $n\geq0$, $\mathbf{P}_Y$-almost all $y$, $\tilde{\gamma}\in(\gamma,1)$ and some $C(y)<\infty$.

%\end{remark}

%\begin{remark}\label{rem:theta_zeta}

%\end{remark}

\subsection{An example with a nonlinear, nonstationary, non-Gaussian signal} \label{sec:non_stationary}

In this section we  consider an instance of the model class addressed in Lemma \ref{lem:non_gauss} and the application of Theorem \ref{thm:main_uniform} to it. With $\mathbb{Y}=\mathbb{R}^d$, the model of interest is:
\begin{equation}
X_n = A(X_{n-1})+W_n\qquad Y_n = X_n + V_n,\label{eq:nonlinear_HMM}
\end{equation}
where $A$ and $(W_n)_{n\geq0}$ are taken to satisfy the assumptions of Lemma \ref{lem:non_gauss} and $V_n \sim \mathcal{N}(0,\sigma_y^2 I_n)$, so that in Lemma \ref{lem:non_gauss} we can take $\lambda_g = -\sigma_y^{-2}$. 

In the case $d=1$, an example of a function $\psi$ (or $\psi_{0}$) which is continuously differentiable and has a bounded, Lipschitz gradient in the sense of Lemma \ref{lem:non_gauss}  is the Huber function, with some $c>0$:
\[
\psi(x)=\begin{cases}
\frac{1}{2c}x^{2}, & |x|\leq c\\
|x|-\frac{c}{2}, & |x|>c.
\end{cases}
\]

By taking $\sigma_y^2$ small enough the condition $\theta < \zeta/2 \wedge \tilde{\zeta}$ in Lemma \ref{lem:non_gauss} is satisfied and therefore Condition \ref{assu:hmm} holds. We now turn to verification of Condition \ref{assu:hmm2}. By direct calculation:
\begin{equation}
\beta_n(y)=\|\left.\nabla_x \log g(x,y_n)\right|_{x=0}\|^2  =\|y_n\|^2 /\sigma_y^4. \label{eq:nonlinear_example_beta}
\end{equation}
and, using the same estimates as in the proof of Lemma \ref{lem:non_gauss},  there exists a finite constant $c$ such that for any $y\in\mathbb{Y}$,
\begin{multline}
\|\nabla_n\phi_n(x,y)-\nabla_n\phi_n(x^\prime,y)\| \vee \|\nabla_n\tilde{\phi}_n(x,y)-\nabla_n\tilde{\phi}_n(x^\prime,y)\|
\\
\leq c \left(\|x_{n-1} - x_{n-1}^\prime \| + \|x_{n} - x_{n}^\prime \| + \|x_{n+1} - x_{n+1}^\prime \|\right),\label{eq:nonlinear_example_phi-phi}
\end{multline}
and assumption a) of Condition \ref{assu:hmm2} holds for all $y\in\mathbb{Y}^{\mathbb{N}}$.

Now let $\mathbb{Y}^\star_\gamma \coloneqq \{y=(y_0,y_1,\ldots)\in\mathbb{Y}^{\mathbb{N}}:\sum_n \gamma^n \|y_n\|^2 <\infty\}$. Considering \eqref{eq:nonlinear_example_beta}, clearly assumption b) of Condition \ref{assu:hmm2} holds for any $y\in\mathbb{Y}^\star_\gamma$. It remains to verify assumption c) of Condition \ref{assu:hmm2}. To do so we combine the following identity:
\begin{align*}
\|\partial U(x,y) - \partial U(x^\prime,y)\|_\gamma^2 &= \|\nabla_0\tilde{\phi}_0(x,y)-\nabla_0\tilde{\phi}_0(x^\prime,y)\|^2 \\
&+\sum_{n\geq 1} \gamma^n \|\nabla_n\phi_n(x,y)-\nabla_n\phi_n(x^\prime,y)\|^2
\end{align*}
with \eqref{eq:nonlinear_example_phi-phi}.

We have thus established that Condition \ref{assu:hmm2} holds and hence Theorem \ref{thm:main_uniform} holds for any $y\in\mathbb{Y}^\star_\gamma$. Our next objective is to exhibit conditions under which $\mathbf{P}_Y(\mathbb{Y}^\star_\gamma)=1$ and establish the rate at which the right hand side of the bound \eqref{eq:thm_2_bound} from Theorem  \ref{thm:main_uniform} converges to zero exponentially hast as $n\to\infty$, for $\mathbf{P}_Y$-almost all $y$. 

Consider the well-specified case, that is where $\mathbf{P}_Y$ is the law of the observations $(Y_n)_{n\geq0}$ corresponding to \eqref{eq:nonlinear_HMM}. To control the growth of moments of $\|Y_n\|$ over time and apply Lemma  \ref{lem:poly_moment_growth}  we shall assume that $\|A(x)-A(x^\prime)\|\leq \|x-x^\prime\|$, in which case $\|A(x)\|\leq \|A(0)\|+\|x\|$,
$$
\|X_n\| \leq \|A(X_{n-1})\| +  \|W_n\| \leq  \|A(0)\|+ \|X_{n-1}\| + \|W_n\| \leq n\|A(0)\| + \sum_{m=1}^n \|W_n\| +\|X_0\| 
$$
and for any $s\in(1/2,1)$, by Minkowski's inequality,
\begin{align}
\mathbf{E}[\|Y_n\|^{2s}]^{1/{2s}} &\leq \mathbf{E}[\|X_n\|^{2s}]^{1/{2s}} + \mathbf{E}[\|V_0\|^{2s}]^{1/{2s}}  \nonumber\\
&\leq n \|A(0)\| + n\mathbf{E}[ \|W_1\|^{2s}]^{1/{2s}} + \mathbf{E}[\|X_0\|^{2s} ]^{1/{2s}} + \mathbf{E}[\|V_0\|^{2s} ]^{1/{2s}}.\label{eq:nongaussian_example_Y_moment}
\end{align}
This estimate allows us to apply Lemma \ref{lem:poly_moment_growth} with $Z_n$ there taken to be $\|Y_n\|^2$, and combined with \eqref{eq:nonlinear_example_beta} we obtain for any $\tilde{\gamma}\in(\gamma,1)$,
\begin{align*}
\sum_{n=0}^\infty \gamma^n \beta_n(y) &\leq \sigma_y^{-4} \sum_{n=0}^\infty \gamma^n \|y_n\|^2 \\
&\leq \sigma_y^{-4} \left(\sup_{n\geq 0} \tilde{\gamma}^n\|y_n\|^2 \right)\sum_{n=0}^\infty \frac{\gamma^n}{\tilde{\gamma}^n} <\infty, \quad \text{for }\mathbf{P}_Y\text{-almost all }y. 
\end{align*}
Therefore under \eqref{eq:nonlinear_HMM}, $\mathbf{P}_Y(\mathbb{Y}^\star_\gamma)=1$. The bound \eqref{eq:nongaussian_example_Y_moment} also allows Lemma \ref{lem:alpha_and_beta} to be applied, and we conclude that there exists $C(y)$ such that $C(y)<\infty$ for $\mathbf{P}_Y$-almost all $y$, and the right hand side of  \eqref{eq:thm_2_bound} is bounded by $C(y)\tilde{\gamma}^n$ for all $n\geq0$ and $\mathbf{P}_Y$-almost all $y$.

Again using Lemma \ref{lem:alpha_and_beta},  we reach the same conclusion in the misspecified case if $\mathbf{P}_Y$ is the law of any  $\mathbb{R}^d$-valued stochastic process $Y=(Y_n)_{n\geq0}$ such that for some $p>0$ and $s>0$,
$$
\sup_{n\geq0}\frac{\mathbf{E}[\|Y_n\|^s]}{(1+n)^p}<\infty.
$$

\subsection{Parallelized approximate optimization}\label{sec:parallel}
The existence of the limit in (\ref{eq:Viterbi}), as has been verified via Theorems \ref{thm:main_non_uniform} and \ref{thm:main_uniform} in the preceeding sections,  suggests that (\ref{eq:optimization_problem})
can be solved approximately using a collection of optimization algorithms
which process data segments in parallel. With
$\Delta$ and $\ell=(n+1)/\Delta$ assumed to be integers, consider the partition:
\[
\{0,\ldots,n\}=\bigcup_{k=1}^{\ell}A_{k},\qquad A_{k}\coloneqq\{(k-1)\Delta,\ldots,k\Delta-1\},
\]
and for an integer $\delta>0$ consider the $\delta$-enlargement
of each $A_{k}$,
\begin{equation}
A_{k}(\delta)\coloneqq\{m\in\{0,\ldots,n\}:\exists a\in A_{k}:|m-a|\leq\delta\}.\label{eq:A_enlarged}
\end{equation}
Suppose the $\ell$ optimization problems:
\begin{equation}
\argmax_{x_{A_{k}(\delta)}}\;p(x_{A_{k}(\delta)}|y_{A_{k}(\delta)}),\qquad k=1,\ldots,\ell,\label{eq:paralell_opt_problem}
\end{equation}
where $x_{A_{k}(\delta)}=(x_{m};m\in A_{k}(\delta))$, $y_{A_{k}(\delta)}=(y_{m};m\in A_{k}(\delta))$, are
solved in parallel. Then in a post-processing step, for each $k$,
the components indexed by $A_{k}(\delta)\setminus A_{k}$ of the solution
to $\argmax_{x_{A_{k}(\delta)}}\;p(x_{A_{k}(\delta)}|y_{A_{k}(\delta)})$
are discarded, and what remains concatenated across $k$ to give an
approximation to the solution of (\ref{eq:optimization_problem}). If it takes $T(n)$ time to solve (\ref{eq:optimization_problem})
the speed-up from parallelization could be as much as a factor of
$T(n)/T(\Delta+2\delta)$.  The Euclidean norm of the approximation error associated with the first segment in the parallelization scheme
can be bounded using (\ref{eq:thm_1_bound}) or
(\ref{eq:thm_2_bound}). The following is an immediate corollary of (\ref{eq:thm_2_bound}).
\begin{corollary}\label{cor:parallel}
If the assumptions of Theorem \ref{thm:main_uniform} hold for some given $y\in\mathbb{Y}^{\mathbb{N}}$,
\begin{align}
\sup_{n\geq\Delta+\delta}\sum_{m=0}^{\Delta}\|\xi_{\Delta+\delta,m}-\xi_{n,m}\|^{2}&\leq\gamma^{-\Delta}\sup_{n\geq\Delta+\delta}\|\xi_{\Delta+\delta}-\xi_{n}\|_{\gamma}^{2}\nonumber\\
&\leq\frac{1}{\lambda^{2}}\left(\gamma^{\delta-1}\frac{2\chi}{\lambda^{2}}\alpha_{\gamma,\Delta+\delta}(y)+\sum_{k=\delta}^{\infty}\gamma^{k}\beta_{\Delta+k}(y)\right).\label{eq:error bound}
\end{align}
\end{corollary}
The right hand side of this bound can be controlled using the same tools and techniques demonstrated in sections \ref{sec:heavy_tailed} and \ref{sec:non_stationary} to analyse its convergence to zero as $\delta\to \infty$.

\subsection{Application to a model of neural population activity}\label{subsec:neural_model}
%State-space models are used in neuroscience to examine
%time-varying dependence in the firing activity of neural populations \cite{paninski2010new,shimazaki2012state,cunningham2014dimensionality,chen2015state,donner2017approximate,Durst2017}.
%In particular, state-space models have been advocated for use in detecting
%cell assemblies in the brain -- ensembles of neurons exhibiting
%coordinated firing -- thought to play a key role in memory
%formation and learning \cite{shimazaki2012state,donner2017approximate}.

State-space models are used in neuroscience to examine
time-varying dependence in the firing activity of neural populations  and  have been advocated for use in detecting
cell assemblies in the brain -- ensembles of neurons exhibiting
coordinated firing -- thought to play a key role in memory
formation and learning \cite{shimazaki2012state,donner2017approximate}. Neural spiking data in the form of multivariate binary time series are commonly modelled using random fields with time-dependent parameters. Here $y_{n}=(y_{n,k}^{(i)},1\leq i\leq N,1\leq k\leq R)\in\{0,1\}^{NR}=:\mathbb{Y}$,
where $y_{n,k}^{(i)}\in\{0,1\}$ indicates absence or presence of spiking activity of the $i$'th of
$N$ neurons during the $n$'th time bin of the $k$'th of $R$ replicated
experimental trials.

Similarly to \cite{shimazaki2012state} we consider a random field model for $y_{n}$
given $x_{n}$, where the latent state has components $x_{n}=(x_{n}^{(i,j)},1\leq i<j\leq N)\in\mathbb{R}^{N(N-1)/2}$,
that is $d=N(N-1)/2$., and:
\begin{equation}
g(x_{n},y_{n})=\exp\left\{\sum_{j>i}\frac{x_{n}^{(i,j)}}{R}\sum_{k=1}^{R}(y_{n,k}^{(i)}-c^{(i)})(y_{n,k}^{(j)}-c^{(j)})-C(x_{n})\right\}.\label{eq:neuro_like-1}
\end{equation}
Where $c^{(i)}$ is the average firing rate of the $i$th neuron over
the $R$ trials. The interest in this model is that the variables $x_{n}^{(i,j)}$ can be interpreted as a time-dependent statistical coupling between the firing activity of neurons $i$ and $j$. The normalizing factor $C(x_{n})$ is too expensive
to compute for anything more than a handful of neurons and following
\cite{donner2017approximate} we consider the pseudo-likelihood approximation:
\begin{align}
\tilde{g}(x_{n},y_{n})&=\prod_{i=1}^{N}\prod_{k=1}^{R}\frac{\exp(y_{n,k}^{(i)}z_{n,k}^{(i)})}{1+\exp(y_{n,k}^{(i)}z_{n,k}^{(i)})},\label{eq:neuro_ps_like-1}\\
z_{n,k}^{(i)}&=\frac{1}{R}\left\{ \sum_{j<i}x_{n}^{(j,i)}(y_{n,k}^{(j)}-c^{(j)})+\sum_{i<j}x_{n}^{(i,j)}(y_{n,k}^{(j)}-c^{(j)})\right\} .\nonumber
\end{align}
Both (\ref{eq:neuro_like-1}) and (\ref{eq:neuro_ps_like-1}) are
log-concave functions of $x_{n}$. Combining either with a prior model
for the signal process as in (\ref{eq:linear_gauss_signal}) with
$b=0$, $A=a I_{N(N-1)/2}$, $|a|<1$, $\Sigma=\sigma^{2}I_{N(N-1)/2}$, where $I_{N(N-1)/2}$ is the identity matrix of size $N(N-1)/2$
and the prior distribution for $X_{0}$, $\mu$, set to the stationary
distribution of (\ref{eq:linear_gauss_signal}) hence by Lemma \ref{lem:AR1},  Condition \ref{assu:hmm} holds and
one may take $\gamma$, $\lambda$
in Theorem \ref{thm:main_non_uniform} independently of $d$ and hence $N$.

For both (\ref{eq:neuro_like-1}) and (\ref{eq:neuro_ps_like-1}) the fact that $\mathbb{Y}=\{0,1\}^{NR}$ is a set with finitely many elements, combined with the linear-Gaussian nature of the signal implies that for 
any $y=(y_{n})_{n\geq 0}\in \mathbb{Y}^{\mathbb{N}}$,
$\sup_{n}\beta_n(y)<\infty$, $\sup_{n}\alpha_{n,\gamma}(y)<\infty$,
and $\sup_{n}\eta_{n}(r,y)<\infty$ for any $r$. Therefore applied to either of these two models, for \emph{any} $y\in\mathbb{Y}^{\mathbb{N}}$, the right hand side of the bound \eqref{eq:thm_1_bound} from Theorem \ref{thm:main_non_uniform} converges to zero as $n\to\infty$. %The assumptions of Corollary \ref{cor:cauchy} therefore hold. 

%For (\ref{eq:neuro_ps_like-1}) combined
%with this prior model, assumptions a)-c) of Theorem \ref{thm:main_uniform} hold for any $y\in\mathbb{Y}^{\mathbb{N}}$.

\subsection{Numerical results}
We consider neural recordings of action potential spike trains from $30$ medial prefrontal cortical neurons. The data were recorded using a 384-electrode Neuropixels probe \cite{jun2017fully} while an adult male rat navigated a 3-arm maze. Each recording was over a duration $35$ seconds, from -15 to +20 seconds around the rat's arrival at particular location on the maze called the ``reward point''. The presence or absence of spiking per neuron was recorded in bins of width $10$ millisecond, so that $n=3500$. The data were divided in two subsets. The first consisted of $R=76$ replicates of ``correct'' trials in which the rat navigated a particular route on the maze and consequently received a sweet reward at the reward point. The second data subset consisted of $R=22$ ``error'' trials, in which the rat did not take that route and consequently received no reward. The scientific objective in analyzing these data is to study time-varying statistical interactions across the population of neurons as the rat approaches and passes through the reward point, and to identify differences in these interactions between the ``correct'' and ``error'' trials. We consider the pseudo-likelihood (\ref{eq:neuro_ps_like-1}) combined with signal model described below that equation, and since there are $N=30$ neurons we have $d=435$.

Our first objective is to numerically evaluate the error and speed-up associated with the parallelization scheme.  For this purpose, gradient-descent was used to approximately solve each of the $\ell$ optimization problems \eqref{eq:paralell_opt_problem} for the neural model described above applied to the data subset of ``correct'' trials. The $\ell$ instances of gradient-descent were implemented in parallel using MATLAB's ``parfor'' command across $\ell$ Intel Sandy Bridge cores, each running at $2.6$ Ghz, on a single blade of the University of Bristol's BlueCrystal Phase 3 cluster. $1.5\times10^4$ iterations of gradient descent were performed with a constant step size $10^{-8}$ in each instance. Let $\{\hat{\xi}_{n,0}(\ell,\delta),\ldots,\hat{\xi}_{n,n}(\ell,\delta)\}$ be the resulting approximation to \eqref{eq:optimization_problem} obtained by combining the approximate solutions to \eqref{eq:paralell_opt_problem} as described in section \ref{subsec:Background-and-motivation}.  Figure \ref{fig1} shows the relative error:
\begin{equation}
\frac{\sqrt{\sum_{m=0}^n\|\hat{\xi}_{n,m}(\ell,\delta)-\hat{\xi}_{n,m}(1,0)\|^2}}{\sqrt{\sum_{m=0}^n\|\hat{\xi}_{n,m}(1,0)\|^2}}\label{eq:rel_error}
\end{equation}
against the overlap parameter $\delta$. Results are shown for the full data set from $N=30$ neurons and also for a subset consisting of the first $N=5$ neurons. Here $\Delta$ is determined by $\Delta=(n+1)/\ell$. The parameters of the state-space model were set to $a=0.95$ and $\sigma = 10^{-4}$.

Corollary \ref{cor:parallel} suggests that \eqref{eq:rel_error} should decay to zero exponentially fast as $\delta$ grows. This is apparent in figure \ref{fig1}.  A reduction in error as $\ell$ decreases can also be observed. The plot on the left of figure \ref{fig2} shows the same results as figure \ref{fig1} but with relative error on a logarithmic scale. Since the lines in figure \ref{fig2} are close to parallel it appears that there is no degradation with $N$ (hence $d$) in the exponential rate at which the relative error decays as $\delta$ grows. The plot on the right of figure \ref{fig2} also shows the relative execution time of the parallelized scheme, that is the time taken to compute $\{\hat{\xi}_{n,m}(\ell,\delta),\ldots,\hat{\xi}_{n,n}(\ell,\delta)\}$ using $\ell=2,4,\ldots,16$ cores divided by the time to compute $\{\hat{\xi}_{n,0}(1,0),\ldots,\hat{\xi}_{n,n}(1,0)\}$. Moving from $1$ to $2$ and $4$ cores results in a roughly linear speed-up. Beyond $4$ cores the speed-up is sublinear, which may be due to communication overhead associated with parallelization. From figure \ref{fig2}, $\delta=100$ and $\ell=4$ results in a relative error of less that $0.1\%$ and a speed-up from parallelization of $1/0.32 = 3.125$.
\begin{figure}
\centering
%\captionsetup{width=\linewidth}
%\includegraphics[scale=0.55]{art/fig1.eps}
\includegraphics[width=0.9\columnwidth]{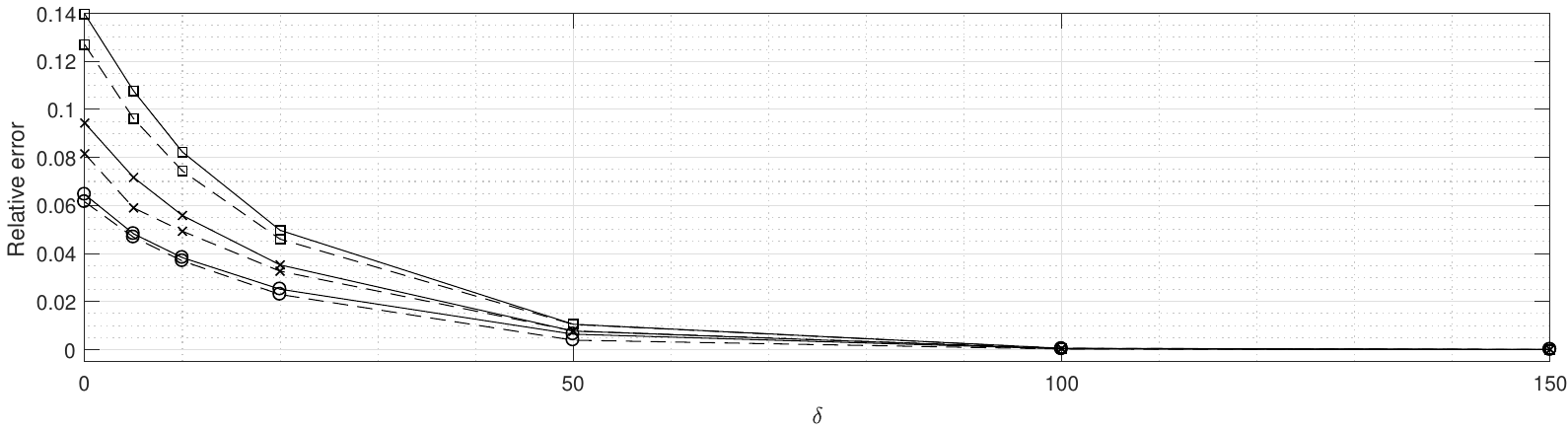}
\caption{Relative error in the parallelization scheme vs. the overlap parameter $\delta$. $N=5$ neurons (dash), $N=30$ neurons (solid), $\ell=4$ cores ($\Box$), $\ell=8$ cores ($\times$) and $\ell=6$ cores ($\circ$).}
\label{fig1}
\end{figure}

\begin{figure}
\centering
%\captionsetup{width=\linewidth}
%\includegraphics[scale=0.55]{art/fig2.eps}
\includegraphics[width=0.9\columnwidth]{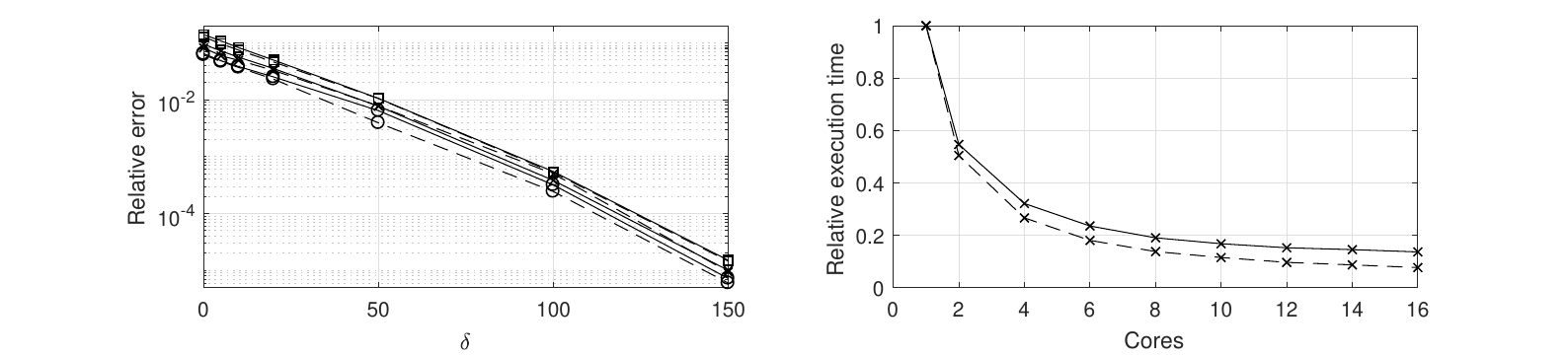}
\caption{Left: relative error vs. overlap parameter $\delta$ as in figure \ref{fig1} but on logarithmic scale. Right: Relative execution time vs. number of cores. $\delta=100$ (solid), $\delta=10$, (dashed).}
\label{fig2}
\end{figure}
Our next objective is to interpret the state-estimates obtained for the two data subsets. Figure \ref{fig3} shows maximum a-posteriori state estimates of the pairwise coupling parameters $x^{ij}_n$ for the two data subsets, consisting respectively of ``correct'' and ``error'' trials. The first and fourth rows in figure \ref{fig3} display $\hat{\xi}_{n,m}(4,100)$ as a heat map, for five different time steps: $m$ corresponding to $3$ ($t=-3$) and $1$ ($t=-1$) seconds before arrival at the reward point; at the reward point ($t=0$), and $1$ ($t=1$) and $3$ ($t=3$) seconds after pass the reward point. These times are marked by vertical blue lines in the second and third row plots in figure \ref{fig3}.

Signatures of neural population interaction emerge from the estimates of pairwise coupling parameters: on correct trials, the red coloring on the heat-maps corresponds to strong positive influence from a small minority of neurons onto a larger pool, at $t=-3$, $t=1$, $t=0$ and extending to $t=1$. Such anticipatory activity could reflect reward expectancy. Conversely, on ``error'' trials the predominantly blue colouring on the heatmaps at $t=-1$, $t=0$ and $t=1$ indicates negative pairwise interactions shortly before, during and after the reward point. The estimates on the ``error'' trials also present greater variation over time than in the ``correct'' trials,  possibly reflecting an error signal or the consolidation of trial outcome-related information.

\begin{figure}
\centering
%\captionsetup{width=\linewidth}
%\includegraphics[scale=0.56]{art/fig3.eps}
%\includegraphics[scale=0.8]{art/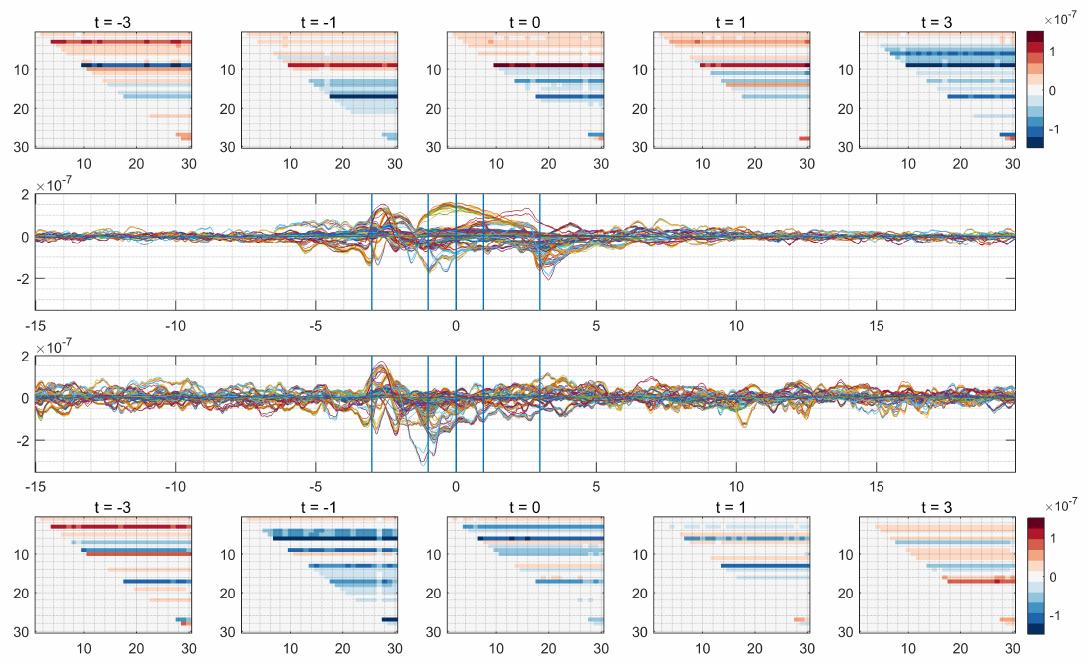}
\includegraphics[width=0.9\columnwidth]{fig3_crop.pdf}
\caption{First and second rows: ``correct'' trials, third and fourth rows: ``error'' trials. Second and third row show $\{\hat{\xi}_{n,0}(4,100),\ldots,\hat{\xi}_{n,n}(4,100)\}$ with time on the horizontal axis in seconds relative to the reward delivery time. First and fourth rows display $\hat{\xi}_{n,m}(4,100)$ as a heat map, for $m$ corresponding left-to-right to: $3$ and $1$ seconds before reward delivery, delivery time itself, and $1$ and $3$ seconds after reward delivery. These times are marked by the vertical blue lines in the second and third row plots.}
\label{fig3}
\end{figure}

\subsection{Comparison to the assumptions of \cite{chigansky2011viterbi}}\label{subsec:Comparison-to-the}
The assumptions of \cite[Thm 3.1]{chigansky2011viterbi} require
that $x\mapsto\mu(x)$ and $(x,x^\prime)\mapsto f(x,x^\prime)$ are log-concave,
and that $x\mapsto g(x,y)$ is strongly log-concave, uniformly in
$y$. As per section \ref{subsec:verifying},
our Condition \ref{assu:hmm}b) does not require all these conditions
to hold simultaneously. Assumption (a4) (sic.) of \cite[Thm 3.1]{chigansky2011viterbi} is
that with $f(x,x^\prime)\propto e^{-\alpha(x,x^\prime)}$, there is a non-decreasing
function $g:\mathbb{R}_{+}\to\mathbb{R}_{+}$ growing to $+\infty$
not faster than polynomially, such that for all $M>0$,
\begin{equation}
\alpha(x,x^\prime)\leq M\quad\Longrightarrow\quad\left|\frac{\partial^{2}}{\partial x\partial x^\prime}\alpha(x,x^\prime)\right|\leq g(M),\quad\text{for all }x,x^\prime.\label{eq:chigansky}
\end{equation}
Putting aside the issue of once versus twice differentiability, this
assumption is related to the terms multiplied by $\theta$ in our
Condition \ref{assu:hmm}b), but allows greater generality because
$g(M)$ can grow with $M$, where as our Condition \ref{assu:hmm}b)
requires a value of $\theta$ uniform in $x,x^{\prime}$. \cite[Thm 3.1]{chigansky2011viterbi}
also places an assumption on the asymptotic behaviour of $n^{-1}U_n(X_{0},\ldots,X_{n},Y_0,\ldots,Y_n)$
as $n\to\infty$  which may be regarded as a counterpart to the assumptions
of Lemma \ref{lem:alpha_and_beta}.
%
%\begin{remark}
%Considering the case $d=1$ for ease of presentation, a likelihood
%function $x\mapsto g(x,y_{n})$ which satisfies (\ref{eq:semi_log_concavity_g})
%for some $\lambda_{g}>0$, but not for any $\lambda_{g}\leq0$ is
%the $x$-centered Student's t-density, for example in the particular
%case of $1$ degree of freedom: $g(x,y_{n})=\pi^{-1/2}\{1+(y_{n}-x)^{2}\}^{-1}$.
%\end{remark}
%

%\section*{Acknowledgement}
%The first author was supported by a Turing Fellowship from the Alan Turing Institute and a Jean Golding Institute GW4 Seed Corn grant.

%\section*{Supplementary material}
%\label{SM}
%Further material such as technical details, extended proofs, code, or additional  simulations, figures and examples may appear online, and should be briefly mentioned as Supplementary Material where appropriate.  Please submit any such content as a PDF file along with your paper, entitled `Supplementary material for Title-of-paper'.  After the acknowledgements, include a section `Supplementary material' in your paper, with the sentence `Supplementary %material available at \Bka\ online includes $\ldots$', giving a brief indication of what is available.  However it should be possible to read and understand the paper without reading the supplementary material.
%
%Further instructions will be given when a paper is accepted.

%\appendix

%\appendixone

\begin{appendix}

\section{Fr\'echet derivatives and differential equations}

\subsection{Fr\'echet derivatives\label{subsec:Fr=0000E9chet-derivatives}}
The following definitions can be found in \cite[App. A]{hopper2011ricci}.
For Banach spaces $V,W$ over $\mathbb{R}$, with respective norms
$\|\cdot\|_{V}$, $\|\cdot\|_{W}$, a function $\varphi:V\to W$ has
a directional derivative at $x\in V$ in direction $v\in V$ if there
exists $\partial\varphi(v;x)\in W$ such that
\[
\lim_{\epsilon\to0}\left\Vert \frac{\varphi(x+\epsilon v)-\varphi(x)}{\epsilon}-\partial\varphi(v;x)\right\Vert _{W}=0.
\]
The function $\varphi$ is G\^ateaux differentiable at $x$ if $\partial\varphi(v;x)$
exists for all $v\in V$ and $D\varphi(\cdot;x):v\mapsto\partial\varphi(v;x)$
is a bounded linear operator from $V$ to $W$, in which case $D\varphi(\cdot;x)$
is called the G\^ateaux derivative at $x$. The function $\varphi$
is additionally Fr\'echet differentiable at $x$ if

\begin{equation}
\lim_{\epsilon\to0}\sup_{v:\|v\|_{V}=1}\left\Vert \frac{\varphi(x+\epsilon v)-\varphi(x)}{\epsilon}-\partial\varphi(v;x)\right\Vert _{W}=0,\label{eq:frechet_defn}
\end{equation}
in which case the operator $D\varphi(\cdot;x)$ is called the Fr\'echet
derivative at $x$.

\subsection{Ordinary differential equations on the Hilbert space\label{subsec:ODE's-on-the-Hilbert}}

In the following proposition the operator of orthogonal projection
from $l_{2}(\gamma)$ to $l_{2}^{n}(\gamma)$ is written $\Pi_n$.
\begin{proposition}
\label{prop:ODE}For a given triple $(\gamma,F,n)$ consisting of
a constant $\gamma\in(0,1]$, a mapping $F:l_{2}(\gamma)\to l_{2}(\gamma)$
and $n\geq 0\cup\{\infty\}$, assume that a)-c) hold:

\noindent a) $F$ is continuous with respect to the norm $\|\cdot\|_{\gamma}$
on $l_{2}(\gamma)$,

\noindent b) there exists $\lambda>0$ such that for all $x,x^{\prime}\in l_{2}^{n}(\gamma)$,
\[
\left\langle x-x^{\prime},F(x)-F(x^{\prime})\right\rangle _{\gamma}\leq-\lambda\|x-x^{\prime}\|_{\gamma}^{2},
\]

\noindent c) $F(x)=F\circ\Pi_{n}(x)$  and $F(x)\in l_{2}^{n}(\gamma)$ for all $x\in l_{2}(\gamma)$.

Then there exists a globally defined and unique flow $\Phi:(t,x)\in\mathbb{R}_{+}\times l_{2}(\gamma)\mapsto\Phi(t,x)\in l_{2}(\gamma)$
solving the Fr\'echet differential equation,
\[
\frac{\mathrm{d}}{\mathrm{d}t}\Phi(t,x)=F(\Phi(t,x)),\qquad\Phi(0,x)=x.
\]
 This flow has a unique in $l_{2}^{n}(\gamma)$ fixed point, $\xi$, and this point satisfies $F(\xi)=0$
and $\|\Phi(t,x)-\xi\|_{\gamma}\leq e^{-\lambda t}\|x-\xi\|_{\gamma}$
for all $x\in l_{2}^{n}(\gamma)$ and $t\geq0$.
\end{proposition}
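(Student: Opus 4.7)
\textbf{Proof sketch for Proposition \ref{prop:ODE}.}
The plan is to reduce the problem to an ODE on the closed subspace $l_{2}^{n}(\gamma)$ by exploiting condition (c), invoke a dissipative-ODE existence theorem there, and then derive the contraction estimate and the fixed point from the dissipativity.

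First I would use condition (c) to decouple the flow. For any solution $t\mapsto\Phi_{t}(x)$ of the Fr\'echet ODE and any $x\in l_{2}(\gamma)$, the orthogonal decomposition $\Phi_{t}(x)=\Pi^{n}\Phi_{t}(x)+(I-\Pi^{n})\Phi_{t}(x)$ satisfies
\[
\frac{\mathrm{d}}{\mathrm{d}t}(I-\Pi^{n})\Phi_{t}(x)=(I-\Pi^{n})F\{\Phi_{t}(x)\}=0,\qquad \frac{\mathrm{d}}{\mathrm{d}t}\Pi^{n}\Phi_{t}(x)=F\{\Pi^{n}\Phi_{t}(x)\},
\]
because $F$ maps into $l_{2}^{n}(\gamma)$ and factors through $\Pi^{n}$. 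Thus the component in $(I-\Pi^{n})l_{2}(\gamma)$ is preserved as $(I-\Pi^{n})x$, and existence/uniqueness of the full flow on $l_{2}(\gamma)$ is equivalent to existence/uniqueness of the restricted flow $t\mapsto\Pi^{n}\Phi_{t}(x)$ on the closed subspace $l_{2}^{n}(\gamma)$.

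Next I would invoke the Deimling existence and uniqueness theorem \citep{deimling2006ordinary} for dissipative ODE's on Banach spaces. Restricted to $l_{2}^{n}(\gamma)$, the vector field $F$ is continuous by (a) and satisfies the dissipativity inequality (b) with parameter $\lambda>0$; these are precisely the hypotheses required for a unique continuous semiflow defined on $\mathbb{R}_{+}\times l_{2}^{n}(\gamma)$, and continuity of $F$ makes $t\mapsto\Phi_{t}(x)$ a $C^{1}$ curve in $l_{2}^{n}(\gamma)$ with derivative $F\{\Phi_{t}(x)\}$. (When $n<\infty$ the space $l_{2}^{n}(\gamma)$ is finite-dimensional and classical ODE theory suffices.) Gluing together the restricted flow with the trivial flow on $(I-\Pi^{n})l_{2}(\gamma)$ yields the global flow on $l_{2}(\gamma)$, and the decoupling above gives its uniqueness.

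The contraction estimate follows by differentiating $\|\Phi_{t}(x)-\Phi_{t}(x')\|_{\gamma}^{2}$ for $x,x'\in l_{2}^{n}(\gamma)$: since both flows remain in $l_{2}^{n}(\gamma)$, the chain rule and (b) give
\[
\frac{\mathrm{d}}{\mathrm{d}t}\|\Phi_{t}(x)-\Phi_{t}(x')\|_{\gamma}^{2}=2\left\langle \Phi_{t}(x)-\Phi_{t}(x'),F\{\Phi_{t}(x)\}-F\{\Phi_{t}(x')\}\right\rangle _{\gamma}\leq-2\lambda\|\Phi_{t}(x)-\Phi_{t}(x')\|_{\gamma}^{2},
\]
and Gronwall's inequality yields $\|\Phi_{t}(x)-\Phi_{t}(x')\|_{\gamma}\leq e^{-\lambda t}\|x-x'\|_{\gamma}$ for all $x,x'\in l_{2}^{n}(\gamma)$.

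Finally, for any fixed $t_{0}>0$, the map $\Phi_{t_{0}}$ is a strict contraction on the complete metric space $l_{2}^{n}(\gamma)$ with contraction constant $e^{-\lambda t_{0}}<1$, so by the Banach fixed point theorem it has a unique fixed point $\xi\in l_{2}^{n}(\gamma)$. The semigroup property $\Phi_{t_{0}}\{\Phi_{s}(\xi)\}=\Phi_{s}\{\Phi_{t_{0}}(\xi)\}=\Phi_{s}(\xi)$ together with the uniqueness of the fixed point of $\Phi_{t_{0}}$ in $l_{2}^{n}(\gamma)$ forces $\Phi_{s}(\xi)=\xi$ for all $s\geq0$, so $\xi$ is a fixed point of the whole flow. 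Uniqueness of this fixed point in $l_{2}^{n}(\gamma)$ and the exponential convergence $\|\Phi_{t}(x)-\xi\|_{\gamma}\leq e^{-\lambda t}\|x-\xi\|_{\gamma}$ both follow by applying the contraction estimate with $x'=\xi$. The main technical obstacle I anticipate is the clean invocation of Deimling's theorem in the $n=\infty$ case, since without Lipschitz continuity the standard Picard--Lindel\"of argument is unavailable and one must appeal to the dissipative-ODE machinery to obtain global existence and uniqueness on the Hilbert space.
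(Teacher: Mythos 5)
Your proposal is correct and rests on the same three pillars as the paper's proof: Deimling's existence--uniqueness theorem for dissipative ODEs on Banach spaces, a chain-rule/Gronwall argument for the exponential contraction, and the Banach fixed point theorem applied to a time-$t_{0}$ map on $l_{2}^{n}(\gamma)$. The one structural difference is where Deimling's theorem is invoked. You decouple the equation via $\Pi^{n}$ and $I-\Pi^{n}$, solve the autonomous ODE on the subspace $l_{2}^{n}(\gamma)$ where the strong dissipativity of (b) is available, and glue on the frozen complementary component; the paper instead works on all of $l_{2}(\gamma)$ at once, observing that for arbitrary $x,x^{\prime}\in l_{2}(\gamma)$ the cross term $\left\langle F\circ\Pi^{n}(x)-F\circ\Pi^{n}(x^{\prime}),\,(I-\Pi^{n})(x-x^{\prime})\right\rangle_{\gamma}$ vanishes by orthogonality, so that $\left\langle F(x)-F(x^{\prime}),x-x^{\prime}\right\rangle_{\gamma}\leq 0$ globally, which is already enough for Deimling's theorem. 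Both routes are valid; yours makes the invariance of $l_{2}^{n}(\gamma)$ and the triviality of the complementary dynamics explicit, while the paper's is slightly more economical because no gluing or uniqueness-transfer step is needed. One point you should not gloss over in the case $n=\infty$ (where your decoupling is vacuous and you must work on the full Hilbert space): the identity $\frac{\mathrm{d}}{\mathrm{d}t}\|\Phi_{t}(x)-\Phi_{t}(x^{\prime})\|_{\gamma}^{2}=2\left\langle \Phi_{t}(x)-\Phi_{t}(x^{\prime}),F\{\Phi_{t}(x)\}-F\{\Phi_{t}(x^{\prime})\}\right\rangle_{\gamma}$ is a chain rule for Fr\'echet derivatives, and its validity requires showing that $x\mapsto\|x\|_{\gamma}^{2}$ is Fr\'echet (not merely G\^ateaux) differentiable on $l_{2}(\gamma)$; the paper devotes Lemma \ref{lem:chain rule} to exactly this, including a uniform-convergence argument to justify interchanging limits. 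Your sketch assumes this step, which is fine for finite $n$ but is the genuine technical content when $n=\infty$.
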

\noindent The proof is postponed.

The term $\frac{\mathrm{d}}{\mathrm{d}t}\Phi(t,x)$ in Proposition
\ref{prop:ODE} is an application of the Fr\'echet derivative of $\Phi(t,x)$
with respect to $t$, that is in (\ref{eq:frechet_defn}), $V$ is
$\mathbb{R}$ equipped with the Euclidean norm, $W$ is the Hilbert
space $l_{2}(\gamma)$, and $\varphi$ is the map $t\mapsto\Phi(t,x)$,
where in the latter the $x$ argument is regarded as fixed. Similarly
with $x$ fixed, and denoting the Fr\'echet derivative of $t\mapsto\Phi(t,x)$
at $t$ by $D\Phi(\cdot;t,x)$, the quantity $\frac{\mathrm{d}}{\mathrm{d}t}\Phi(t,x)$
is precisely $D\Phi(1;t,x)$. Thus in particular,
\[
\lim_{\delta\searrow0}\left\Vert \frac{\Phi(t+\delta,x)-\Phi(t,x)}{\delta}-\frac{\mathrm{d}}{\mathrm{d}t}\Phi(t,x)\right\Vert _{\gamma}=0,
\]
which, in general, is stronger than the element-wise convergence
of $[\Phi(t+\delta,x)-\Phi(t,x)]/\delta$ to $\frac{\mathrm{d}}{\mathrm{d}t}\Phi(t,x)$.

The following lemma will be used in the proof of Proposition \ref{prop:ODE}.
\begin{lemma}
\label{lem:chain rule}If a triple $(\gamma,F,n)$ satisfies the assumptions
of Proposition \ref{prop:ODE}, then with $\Phi$ as therein and any
$x,x^{\prime}\in l_{2}(\gamma)$,
\begin{align}
\frac{\mathrm{d}}{\mathrm{d}t}\|\Phi(t,x)-\Phi(t,x^{\prime})\|_{\gamma}^{2} & =2\left\langle \Phi(t,x)-\Phi(t,x^{\prime}),F(\Phi(t,x))-F(\Phi(t,x^{\prime}))\right\rangle _{\gamma}.\label{eq:dt_of_square_norm_of_phi_diff}
\end{align}
\end{lemma}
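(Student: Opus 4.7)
The plan is to derive the identity from the Fréchet differentiability of $t\mapsto\Phi_t(x)$ (and of $t\mapsto\Phi_t(x')$) supplied by Proposition \ref{prop:ODE}, combined with the Hilbert-space polarization identity $\langle a,a\rangle_\gamma-\langle b,b\rangle_\gamma=\langle a-b,a+b\rangle_\gamma$ and joint continuity of the inner product. The argument is an infinite-dimensional analogue of the elementary product rule, so there is no deep obstacle; the only subtlety is making sure that all limits are taken in the norm $\|\cdot\|_\gamma$ rather than only element-wise.

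First, I would set $\psi(t)=\Phi_t(x)-\Phi_t(x')$, which lies in $l_2(\gamma)$ since $l_2(\gamma)$ is a vector space. Because addition in $l_2(\gamma)$ is a bounded linear operation, $t\mapsto\psi(t)$ is Fréchet differentiable at every $t\geq 0$, with
\[
\dot\psi(t)=\tfrac{\mathrm d}{\mathrm dt}\Phi_t(x)-\tfrac{\mathrm d}{\mathrm dt}\Phi_t(x')=F\{\Phi_t(x)\}-F\{\Phi_t(x')\},
\]
the second equality being the ODE given in Proposition \ref{prop:ODE}.

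Second, I would apply the polarization identity above with $a=\psi(t+\delta)$ and $b=\psi(t)$ and divide by $\delta$, obtaining
\[
\frac{\|\psi(t+\delta)\|_\gamma^2-\|\psi(t)\|_\gamma^2}{\delta}=\left\langle \frac{\psi(t+\delta)-\psi(t)}{\delta},\;\psi(t+\delta)+\psi(t)\right\rangle_\gamma .
\]
As $\delta\to 0$, Fréchet differentiability gives $\delta^{-1}\{\psi(t+\delta)-\psi(t)\}\to\dot\psi(t)$ in $\|\cdot\|_\gamma$, while continuity of $\psi$ (a consequence of its Fréchet differentiability) gives $\psi(t+\delta)+\psi(t)\to 2\psi(t)$ in the same norm.

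Third, I would pass to the limit inside the inner product using the estimate $|\langle u_\delta,v_\delta\rangle_\gamma-\langle u,v\rangle_\gamma|\leq \|u_\delta-u\|_\gamma\|v_\delta\|_\gamma+\|u\|_\gamma\|v_\delta-v\|_\gamma$, which is immediate from Cauchy--Schwarz and shows that $\langle\cdot,\cdot\rangle_\gamma$ is jointly continuous on $l_2(\gamma)\times l_2(\gamma)$. This yields $\tfrac{\mathrm d}{\mathrm dt}\|\psi(t)\|_\gamma^2=\langle\dot\psi(t),2\psi(t)\rangle_\gamma=2\langle\psi(t),F\{\Phi_t(x)\}-F\{\Phi_t(x')\}\rangle_\gamma$, which is the identity to be proved. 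The step that merits the most care is ensuring the limits hold in $\|\cdot\|_\gamma$; this is exactly what the Fréchet (as opposed to merely Gâteaux or coordinate-wise) character of the derivative of $\Phi_t(\cdot)$ in $t$, built into Proposition \ref{prop:ODE}, provides.
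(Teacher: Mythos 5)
Your proof is correct, but it takes a genuinely different route from the paper's. The paper first proves, from scratch, that the map $\varphi(x)=\|x\|_{\gamma}^{2}$ is Fr\'echet differentiable everywhere on $l_{2}(\gamma)$ with derivative $D\varphi(v;x)=2\left\langle v,x\right\rangle _{\gamma}$ --- this occupies most of the argument, requiring an interchange-of-limits step to pass from directional derivatives along the truncations $\Pi^{m}(v)$ to arbitrary directions, a boundedness check for G\^ateaux differentiability, and an operator-norm continuity check to upgrade to Fr\'echet differentiability --- and then concludes by the chain rule of Fr\'echet differentiation applied to $t\mapsto\varphi\{\Phi_{t}(x)-\Phi_{t}(x^{\prime})\}$. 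You bypass all of that: the algebraic identity $\|a\|_{\gamma}^{2}-\|b\|_{\gamma}^{2}=\left\langle a-b,a+b\right\rangle _{\gamma}$ turns the difference quotient of the scalar function $t\mapsto\|\psi(t)\|_{\gamma}^{2}$ directly into an inner product of two quantities that converge in $\|\cdot\|_{\gamma}$ (the first by the Fr\'echet, i.e.\ norm-convergent, character of $\mathrm{d}\Phi_{t}/\mathrm{d}t$ supplied by Proposition \ref{prop:ODE}, the second by the resulting continuity of $\psi$), and joint continuity of $\left\langle \cdot,\cdot\right\rangle _{\gamma}$ finishes the job. Your argument is shorter and more elementary, needing neither the differentiability of the squared norm as a map on $l_{2}(\gamma)$ nor the Fr\'echet chain rule; what it gives up is the reusable fact that $\varphi$ is Fr\'echet differentiable, which the paper's route establishes as a by-product (though that fact is not used elsewhere in the paper). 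Both proofs correctly identify the one genuine subtlety --- that all limits must be taken in $\|\cdot\|_{\gamma}$ rather than element-wise --- and both draw exactly on what Proposition \ref{prop:ODE} provides. Your proof is a valid substitute.
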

\begin{proof}
In the case $n<\infty$, assumption c) of Proposition \ref{prop:ODE}
implies that only the first $d(n+1)$ elements of the vector $\Phi(t,x)$
depend on $t$, and in that case the lemma can be proved by the chain
rule of elementary differential calculus. The following proof is valid
for any $n\geq 0\cup\{\infty\}$ and uses the chain rule
of Fr\'echet differentiation.

Pick any $x,v\in l_{2}(\gamma)$, write them as $x=[x_{0}^{\mathrm{T}}\,x_{1}^{\mathrm{T}}\,\cdots]^{\mathrm{T}}$,
$v=[v_{0}^{\mathrm{T}}\,v_{1}^{\mathrm{T}}\,\cdots]^{\mathrm{T}}$ with each $x_{k},v_{k}\in\mathbb{R}^{d}$.
The first step is to prove that the mapping $\varphi(x)=\|x\|_{\gamma}^{2}$
is Fr\'echet differentiable everywhere in $l_{2}(\gamma)$, with Fr\'echet
derivative $D\varphi(v;x)=2\left\langle v,x\right\rangle _{\gamma}$.

Consider the existence of directional derivatives. For $m\in\mathbb{N}$
let $e_{m}$ denote the vector in $l_{2}(\gamma)$ whose $m$th entry
is $1$ and whose other entries are zero. The directional derivative
$\partial\varphi(e_{m};x)$ clearly exists.

We now need to check the existence of directional derivatives of $\varphi$
in arbitrary directions in $l_{2}(\gamma)$. To do so we shall validate
the following four equalities:
\begin{align}
\lim_{\epsilon\to0}\frac{\varphi(x+\epsilon v)-\varphi(x)}{\epsilon} & =\lim_{\epsilon\to0}\lim_{m\to\infty}\frac{\varphi(x+\epsilon\Pi_{m}(v))-\varphi(x)}{\epsilon}\label{eq:chain_rule1}\\
 & =\lim_{m\to\infty}\lim_{\epsilon\to0}\frac{\varphi(x+\epsilon\Pi_{m}(v))-\varphi(x)}{\epsilon}\label{eq:chain_rule2}\\
 & =\lim_{m\to\infty}2\sum_{k=0}^{m}\gamma^{k}\left\langle v_{k},x_{k}\right\rangle \label{eq:chain_rule3}\\
 & =2\left\langle v,x\right\rangle _{\gamma}.\label{eq:chain_rule_4}
\end{align}

For (\ref{eq:chain_rule1}), we have for any $\epsilon>0$,
\begin{align}
|\varphi(x+\epsilon\Pi_{m}(v))-\varphi(x+\epsilon v)| & \leq\sum_{k\geq m+1}\gamma^{k}\left|\|x_{k}\|^{2}-\|x_{k}+\epsilon v_{k}\|^{2}\right|\nonumber \\
 & \leq3\sum_{k\geq m+1}\gamma^{k}\|x_{k}\|^{2}+\epsilon^{2}\sum_{k\geq m+1}\gamma^{k}\|v_{k}\|^{2}\nonumber \\
 & \to0,\quad\mathrm{as}\quad m\to\infty,\label{eq:v^m_to_v}
\end{align}
where the convergence holds since $x$ and $v$ are members of $l_{2}(\gamma)$.

Let $\nabla_{k}\varphi(x)$ be the vector in $\mathbb{R}^{d}$ whose
$i$th entry is the partial derivative of $\varphi(x)$ with respect
to the $i$th element of $x_{k}$, that is $\nabla_{k}\varphi(x)=2x_{k}$.
Since $\varphi(x)=\sum_{k=0}^{\infty}\gamma^{k}\|x_{k}\|^{2}$, the
directional derivative in direction $\Pi_{m}(v)$ at $x$ is given
by:
\begin{equation}
\partial\varphi(\Pi_{m}(v);x)=\lim_{\epsilon\to0}\frac{\varphi(x+\epsilon\Pi_{m}(v))-\varphi(x)}{\epsilon}=\sum_{k=0}^{m}\left\langle v_{k},\nabla_{k}\varphi(x)\right\rangle =2\sum_{k=0}^{m}\gamma^{k}\left\langle v_{k},x_{k}\right\rangle .\label{eq:truncated_derivatives}
\end{equation}
Let us now check that the convergence in (\ref{eq:truncated_derivatives})
is uniform in $m$ in order to verify the equality in (\ref{eq:chain_rule2}).
By the mean value theorem of elementary differential calculus, for
any $\epsilon>0$ there exists $y^{m,\epsilon}$ on the line segment
between $x$ and $x+\epsilon\Pi_{m}(v)$ (so $y_{k}^{m,\epsilon}=x_{k}$
for $k>m$) such that
\begin{align}
&\sup_{m}\left|\frac{\varphi(x+\epsilon\Pi_{m}(v))-\varphi(x)}{\epsilon}-2\left\langle \Pi_{m}(v),x\right\rangle _{\gamma}\right| \nonumber\\
& =\sup_{m}\left|\frac{\sum_{k=0}^{m}\epsilon\left\langle v_{k},\nabla_{k}\varphi(y^{m,\epsilon})\right\rangle }{\epsilon}-2\left\langle \Pi_{m}(v),x\right\rangle _{\gamma}\right|\nonumber \\
 & =2\sup_{m}\left|\left\langle \Pi_{m}(v),y^{m,\epsilon}-x\right\rangle _{\gamma}\right|\nonumber \\
 & \leq2\sup_{m}\|\Pi_{m}(v)\|_{\gamma}\|y^{m,\epsilon}-x\|_{\gamma}\nonumber \\
 & \leq2\|v\|_{\gamma}^{2}\epsilon,\label{eq:uniform convergence}
\end{align}
so the convergence in (\ref{eq:truncated_derivatives}) is indeed
uniform in $m$. Therefore (\ref{eq:chain_rule2}) holds.

For the two remaining equalities, (\ref{eq:chain_rule3}) is already
proved in (\ref{eq:truncated_derivatives}), and (\ref{eq:chain_rule_4})
holds by Cauchy-Schwartz combined with the facts that $x,v\in l_{2}(\gamma)$
and that absolute convergence of a series in $\mathbb{R}$ implies
its convergence.

We have established that the directional derivative of $\varphi$
at an arbitrary $x$ in an arbitrary direction $v$ exists and is
given by $2\left\langle v,x\right\rangle _{\gamma}$. To prove that
$\varphi$ is everywhere G\^ateaux differentiable, we also need to show
that for each $x$, $D\varphi(\cdot;x):v\mapsto2\left\langle v,x\right\rangle _{\gamma}$
is a bounded operator from $l_{2}(\gamma)$ to $\mathbb{R}$. This
follows from Cauchy-Schwartz:
\[
\sup_{v\neq0}\frac{2|\left\langle v,x\right\rangle _{\gamma}|}{\|v\|_{\gamma}}\leq2\|x\|_{\gamma}<+\infty,\quad\text{for all }x\in l_{2}(\gamma).
\]
To prove that $\varphi$ is Fr\'echet differentiable everywhere in $l_{2}(\gamma)$,
it suffices, by \cite[App A, Prop A.3]{hopper2011ricci}, to check
that $D\varphi(\cdot;x)$ is operator-norm continuous in $x$. This
follows again by the Cauchy-Schwartz inequality:
\[
\sup_{v\neq0}\frac{2|\left\langle v,x\right\rangle _{\gamma}-\left\langle v,x^{\prime}\right\rangle _{\gamma}|}{\|v\|_{\gamma}}=\sup_{v\neq0}\frac{2|\left\langle v,x-x^{\prime}\right\rangle _{\gamma}|}{\|v\|_{\gamma}}\leq2\|x-y\|_{\gamma},\quad\text{for all }x,x^{\prime}\in l_{2}(\gamma).
\]

We have proved that $\varphi(x)=\|x\|_{\gamma}^{2}$ is Fr\'echet differentiable
everywhere in $l_{2}(\gamma)$, with Fr\'echet derivative in direction
$v$ given by $D\varphi(v;x)=2\left\langle v,x\right\rangle _{\gamma}$.

The proof is completed by an application of the chain rule of Fr\'echet
differentiation:
\begin{align*}
\frac{\mathrm{d}}{\mathrm{d}t}\|\Phi(t,x)-\Phi(t,x^{\prime})\|_{\gamma}^{2} & =D\varphi\left(\frac{\mathrm{d}}{\mathrm{d}t}\{\Phi(t,x)-\Phi(t,x^{\prime})\};\Phi(t,x)-\Phi(t,x^{\prime})\right)\\
 & =2\left\langle F\{\Phi(t,x)\}-F\{\Phi(t,x^{\prime})\},\Phi(t,x)-\Phi(t,x^{\prime})\right\rangle _{\gamma}.
\end{align*}
\end{proof}
\begin{proof}
[Proof of Proposition \ref{prop:ODE}]Let $n\geq 0\cup\{\infty\}$ be as in the statement of the proposition.  Applying assumptions b) and c) of the proposition, we have for any
$x,x^{\prime}\in l_{2}(\gamma)$,
\begin{align*}
\left\langle F(x)-F(x^{\prime}),x-x^{\prime}\right\rangle _{\gamma} & =\left\langle F\circ\Pi_{n}(x)-F\circ\Pi_{n}(x^{\prime}),\Pi_{n}(x)-\Pi_{n}(x^{\prime})\right\rangle _{\gamma}\\
 & +\left\langle F\circ\Pi_{n}(x)-F\circ\Pi_{n}(x^{\prime}),x-\Pi_n(x)-x^{\prime}+\Pi_n(x^{\prime})\right\rangle _{\gamma}\\
 & \leq-\lambda\|\Pi_{n}(x)-\Pi_{n}(x^{\prime})\|_{\gamma}^{2}+0\\
 & \leq0.
\end{align*}
This global dissipation condition, combined with assumption a) of
the proposition allows the application of \cite[Thm 3.4, p.41]{deimling2006ordinary}
on the Hilbert space $l_{2}(\gamma)$ to give the existence and uniqueness
of the globally defined flow as required.

Under assumption
c) of the proposition, $F(x)\in l_{2}^{n}(\gamma)$ for any $x\in l_{2}(\gamma)$.Then, since $\Phi(t,x)=x+\int_0^t F(\Phi(s,x))\mathrm{d}s$,we find that if $x\in l_{2}^{n}(\gamma)$,
then $\Phi(t,x)\in l_{2}^{n}(\gamma)$ for all $t>0$. Now fix any $x,x^{\prime}\in l_{2}^n(\gamma)$ and define $a(t)\coloneqq \|\Phi(t,x)-\Phi(t,x^\prime)\|_\gamma^2$.  Lemma \ref{lem:chain rule} combined with assumption b) of the proposition gives:
$$
\frac{\mathrm{d}}{\mathrm{d}t}a(t)\leq -2\lambda a(t),
$$
from which it follows that
$$
a(t)\leq a(0)\exp(-2\lambda t).\label{eq:a(t)}
$$ 
We have thus proved that $\|\Phi(t,x)-\Phi(t,x^{\prime})\|_{\gamma}\leq e^{-\lambda t}\|x-x^{\prime}\|_{\gamma}$
for all $x,x^{\prime}\in l_{2}^{n}(\gamma)$.  An application
of the Banach fixed point theorem to the restriction of $\Phi$ to
the Hilbert space $l_{2}^{n}(\gamma)$ then gives the existence of
the unique (in $l_{2}^{n}$) fixed point $\xi$. Since $\xi$ is a fixed point we have for all $t>0$
$$
\xi = \Phi(t,\xi) =  \xi+ \int_0^t F(\Phi(s,\xi)) \mathrm{d}s =\xi+  F(\xi) t,
$$
which implies $ F(\xi)=0$.
\end{proof}

\section{Proofs for section \ref{sec:quantitative_bounds}}\label{subsec:Proofs-of-the}
In Lemma \ref{lem:check_dissipative_cond} and the proof of Theorem
\ref{thm:main_non_uniform} below we shall need the following generalization
of the inner-product $\left\langle \cdot,\cdot\right\rangle _{\gamma}$
and norm $\|\cdot\|_{\gamma}$, for $n\geq 0$,
\begin{equation}
\left\langle x,x^{\prime}\right\rangle _{\gamma,n}=\sum_{m=0}^{\infty}\gamma^{|m-n|}\left\langle x_{m},x_{m}^{\prime}\right\rangle ,\qquad\|x\|_{\gamma,n}=\left\langle x,x\right\rangle _{\gamma,n}^{1/2},\qquad x,x^{\prime}\in l_{2}(\gamma).\label{eq:defn_n_prod}
\end{equation}

\begin{lemma}
\label{lem:check_dissipative_cond}Assume that Condition \ref{assu:hmm}
holds, and with $\zeta,\tilde{\zeta},\theta$ as therein and $\gamma\in(0,1]$
assume the following inequalities hold:
\begin{equation}
\zeta>\frac{\theta}{2\gamma}(1+\gamma)^{2},\qquad\tilde{\zeta}>\frac{\theta}{2\gamma}(1+\gamma).\label{eq:theta_gamma_inequality-1}
\end{equation}
Then for any $\lambda$ such that:
\begin{equation}
0<\lambda\leq\left\{ \tilde{\zeta}-\frac{\theta}{2\gamma}(1+\gamma)^{2}\right\} \wedge\left\{ \zeta-\frac{\theta}{2\gamma}(1+\gamma)\right\} ,\label{eq:lambda_conditions-1}
\end{equation}
any $n\geq 0$ and $m=0,\ldots,n$,
\[
\left\langle x-x^{\prime},\nabla U_n(x,y)-\nabla U_n(x^{\prime},y)\right\rangle _{\gamma,m}\geq\lambda\|x-x^{\prime}\|_{\gamma,m}^{2},\quad\text{for all}\quad x,x^{\prime}\in l_{2}^{n}(\gamma).
\]
\end{lemma}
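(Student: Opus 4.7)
\textbf{Proof proposal for Lemma \ref{lem:check_dissipative_cond}.}

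The plan is a direct computation: expand the weighted inner product, apply the dissipation inequalities from Condition \ref{assu:hmm}b) term by term, bound the bilinear cross-terms by AM--GM, and verify that the coefficient of each $\|x_k-x_k'\|^2$ in the resulting lower bound is at least $\lambda\gamma^{|k-m|}$. Write $d_k=\|x_k-x_k'\|$ for brevity. Since $x,x'\in l_2^n(\gamma)$ we have $d_k=0$ for $k>n$, and by \eqref{eq:gradU^N_defn} only the indices $k=0,\ldots,n$ contribute to the inner product, with $\nabla_0\tilde\phi_0$ and $\nabla_n\tilde\phi_n$ on the boundary and $\nabla_k\phi_k$ in the interior. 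Hence
\[
\langle x-x',\nabla U^n(x)-\nabla U^n(x')\rangle_{\gamma,m}
=-\sum_{k=0}^n\gamma^{|k-m|}\langle x_k-x_k',\nabla_k\psi_k(x)-\nabla_k\psi_k(x')\rangle,
\]
where $\psi_k=\tilde\phi_k$ for $k\in\{0,n\}$ and $\psi_k=\phi_k$ otherwise.

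Applying Condition \ref{assu:hmm}b) to each summand gives a lower bound in which the diagonal contribution of $d_k^2$ is $\gamma^{|k-m|}\zeta d_k^2$ (or $\gamma^{|k-m|}\tilde\zeta d_k^2$ at $k\in\{0,n\}$), while each neighbour pair $(k,k+1)$ for $k=0,\ldots,n-1$ produces an off-diagonal penalty of the form $-\theta\bigl(\gamma^{|k-m|}+\gamma^{|k+1-m|}\bigr)d_k d_{k+1}$, since each such pair appears once from the $k$-term and once from the $(k+1)$-term of the $\phi/\tilde\phi$ bounds. Using the simple AM--GM bound $d_k d_{k+1}\le\tfrac12(d_k^2+d_{k+1}^2)$, I collect the coefficient of $d_k^2$ from the two incident edges $(k-1,k)$ and $(k,k+1)$; for an interior index $k\in\{1,\ldots,n-1\}$ this coefficient equals
\[
\tfrac{\theta}{2}\bigl(\gamma^{|k-1-m|}+2\gamma^{|k-m|}+\gamma^{|k+1-m|}\bigr).
\]
The key algebraic identity is that when $|k-m|\ge 1$ this equals $\tfrac{\theta(1+\gamma)^2}{2\gamma}\gamma^{|k-m|}$, while for $k=m$ it reduces to $\theta(1+\gamma)$, which is no larger than $\tfrac{\theta(1+\gamma)^2}{2\gamma}$ whenever $\gamma\le 1$. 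Thus the net coefficient of $d_k^2$ is at least $\gamma^{|k-m|}\bigl\{\zeta-\tfrac{\theta(1+\gamma)^2}{2\gamma}\bigr\}\ge\lambda\gamma^{|k-m|}$ by \eqref{eq:lambda_conditions-1}.

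For the boundary indices $k\in\{0,n\}$ only one edge contributes, so the penalty is $\tfrac{\theta}{2}\bigl(\gamma^{|k-m|}+\gamma^{|k\pm1-m|}\bigr)$ rather than twice this. A short case analysis (the cases $k=m$, $k<m$, $k>m$, with $m\le n$) shows that the net coefficient of $d_k^2$ is at least $\gamma^{|k-m|}\bigl\{\tilde\zeta-\tfrac{\theta(1+\gamma)}{2\gamma}\bigr\}\ge\lambda\gamma^{|k-m|}$, again by the stated hypothesis on $\lambda$. Summing over $k=0,\ldots,n$ yields the required bound $\lambda\sum_k\gamma^{|k-m|}d_k^2=\lambda\|x-x'\|_{\gamma,m}^2$.

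The main obstacle is purely bookkeeping: the off-diagonal weights $\gamma^{|k-m|}+\gamma^{|k+1-m|}$ depend on the position of $m$ relative to each edge, so one must check that the telescoping identity $\gamma^{|k-1-m|}+2\gamma^{|k-m|}+\gamma^{|k+1-m|}=\gamma^{|k-m|}(1+\gamma)^2/\gamma$ holds away from $k=m$ and fails only harmlessly at $k=m$ (where $\gamma\le 1$ saves the bound). The boundary cases at $k=0$ and $k=n$ must be handled separately because the incident-edge count drops by one, which is precisely the reason the weaker $(1+\gamma)/(2\gamma)$ coefficient appears against $\tilde\zeta$ rather than the $(1+\gamma)^2/(2\gamma)$ coefficient against $\zeta$; no further analytic argument is required beyond these elementary inequalities.
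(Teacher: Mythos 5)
Your proposal is correct and follows essentially the same route as the paper's proof: expand the weighted inner product, apply Condition \ref{assu:hmm}b) term by term, bound the cross-terms via $2ab\le a^{2}+b^{2}$, and check that the collected coefficient of each $\|x_{k}-x_{k}^{\prime}\|^{2}$ is at least $\lambda\gamma^{|k-m|}$, with your edge-by-edge bookkeeping and the identity $\gamma^{|k-1-m|}+2\gamma^{|k-m|}+\gamma^{|k+1-m|}=\gamma^{|k-m|}(1+\gamma)^{2}/\gamma$ for $k\neq m$ being just a reorganization of the paper's bounds on the ratios $\gamma^{|k\pm1-m|}/\gamma^{|k-m|}$. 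One remark: your pairing of $(1+\gamma)^{2}/(2\gamma)$ with $\zeta$ (interior terms) and $(1+\gamma)/(2\gamma)$ with $\tilde{\zeta}$ (boundary terms) agrees with the paper's own derivation and with \eqref{eq:theta_gamma_inequality-1}, so the apparent swap of $\zeta$ and $\tilde{\zeta}$ in \eqref{eq:lambda_conditions-1} as printed is a typo in the statement rather than a gap in your argument.
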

\begin{proof}
For any $x,x^{\prime}\in l_{2}^{n}(\gamma)$ we have:
\begin{align*}
 & \left\langle x-x^{\prime},\nabla U_n(x,y)-\nabla U_n(x^{\prime},y)\right\rangle _{\gamma,m}\nonumber \\
 & =-\gamma^{|m|}\left\langle x_{0}-x_{0}^{\prime},\nabla_{0}\tilde{\phi}_{0}(x)-\nabla_{0}\tilde{\phi}_{0}(x^{\prime})\right\rangle \nonumber \\
 & \quad-\sum_{k=1}^{n-1}\gamma^{|k-m|}\left\langle x_{k}-x_{k}^{\prime},\nabla_{k}\phi_{k}(x)-\nabla_{k}\phi_{k}(x^{\prime})\right\rangle \nonumber \\
 & \quad-\gamma^{|n-m|}\left\langle x_{n}-x_{n}^{\prime},\nabla_{n}\tilde{\phi}_n(x,y)-\nabla_{n}\tilde{\phi}_{n}(x^{\prime},y)\right\rangle \nonumber 
    \end{align*}
\begin{align}
& \geq\gamma^{m}\left\{ \tilde{\zeta}\|x_{0}-x_{0}^\prime \|^{2}-\theta\|x_{0}-x_{0}^{\prime}\|\|x_{1}-x_{1}^{\prime}\|\right\} \nonumber \\
 & \quad+\sum_{k=1}^{n-1}\gamma^{|k-m|}\left\{ \zeta\|x_{k}-x_{k}^{\prime}\|^{2}-\theta\|x_{k}-x_{k}^{\prime}\|\left(\|x_{k-1}-x_{k-1}^{\prime}\|+\|x_{k+1}-x_{k+1}^{\prime}\|\right)\right\} \nonumber \\
 & \quad+\gamma^{n-m}\left\{ \tilde{\zeta}\|x_{n}-x_{n}^{\prime}\|^{2}-\theta\|x_{n}-x_{n}^{\prime}\|\|x_{n-1}-x_{n-1}^{\prime}\|\right\} \nonumber \\
& \geq\gamma^{m}\left\{ (\tilde{\zeta}-\frac{\theta}{2})\|x_{0}-x_{0}^{\prime}\|^{2}-\frac{\theta}{2}\|x_{1}-x_{1}^{\prime}\|^{2}\right\} \nonumber \\
 & \quad+\sum_{k=1}^{n-1}\gamma^{|k-m|}\left\{ (\zeta-\theta)\|x_{k}-x_{k}^{\prime}\|^{2}-\frac{\theta}{2}\|x_{k-1}-x_{k-1}^{\prime}\|^{2}-\frac{\theta}{2}\|x_{k+1}-x_{k+1}^{\prime}\|^{2}\right\} \nonumber \\
 & \quad+\gamma^{n-m}\left\{ (\tilde{\zeta}-\frac{\theta}{2})\|x_{n}-x_{n}^{\prime}\|^{2}-\frac{\theta}{2}\|x_{n-1}-x_{n-1}^{\prime}\|^{2}\right\}\\
 & =\gamma^{m}\left\{ (\tilde{\zeta}-\frac{\theta}{2})\|x_{0}-x_{0}^{\prime}\|^{2}-\frac{\gamma^{|1-m|}}{\gamma^{m}}\frac{\theta}{2}\|x_{0}-x_{0}^{\prime}\|^{2}\right\} \nonumber \\
 & \quad\sum_{k=1}^{n-1}\gamma^{|k-m|}\left\{ (\zeta-\theta)\|x_{k}-x_{k}^{\prime}\|^{2}-\frac{\gamma^{|k-1-m|}}{\gamma^{|k-m|}}\frac{\theta}{2}\|x_{k}-x_{k}^{\prime}\|^{2}-\frac{\gamma^{|k+1-m|}}{\gamma^{|k-m|}}\frac{\theta}{2}\|x_{k}-x_{k}^{\prime}\|^{2}\right\} \nonumber \\
 & \quad\gamma^{n-m}\left\{ (\tilde{\zeta}-\frac{\theta}{2})\|x_{n}-x_{n}^{\prime}\|^{2}-\frac{\gamma^{|n-1-m|}}{\gamma^{n-m}}\frac{\theta}{2}\|x_{n}-x_{n}^{\prime}\|^{2}\right\} \nonumber \\
 & \geq\gamma^{m}\{\tilde{\zeta}-\frac{\theta}{2}(1+\gamma^{-1})\}\|x_{0}-x_{0}^{\prime}\|^{2}\nonumber \\
 & \quad+\{\zeta-\theta-\frac{\theta}{2}(\gamma+\gamma^{-1})\}\sum_{k=1}^{n-1}\gamma^{|k-m|}\|x_{k}-x_{k}^{\prime}\|^{2}\nonumber \\
 & \quad+\{\tilde{\zeta}-\frac{\theta}{2}(1+\gamma^{-1})\}\gamma^{n-m}\|x_{n}-x_{n}^{\prime}\|^{2}\nonumber \\
 & \geq\lambda\sum_{k=0}^{n}\gamma^{|k-m|}\|x_{k}-x_{k}^{\prime}\|^{2},\label{eq:verify_convexity-1}
\end{align}
 where the first equality and inequality are due to (\ref{eq:gradU^N_defn})
and Condition \ref{assu:hmm}b); the second inequality uses a re-arrangement of the terms in the summation and the fact
that for any $a,b\in\mathbb{R}$, $2|a||b|\leq|a|^{2}+|b|^{2}$; the
third inequality uses $\frac{\gamma^{|1-m|}}{\gamma^{m}}\vee\frac{\gamma^{|n-1-m|}}{\gamma^{n-m}}\leq\frac{1}{\gamma}$
for $0\leq m\leq n$ and $\frac{\gamma^{|k-1-m|}}{\gamma^{|k-m|}}+\frac{\gamma^{|k+1-m|}}{\gamma^{|k-m|}}\leq\frac{1}{\gamma}+\gamma$
; and the final inequality holds under the conditions on $\lambda,\zeta,\tilde{\zeta},\theta$
and $\gamma$ given in (\ref{eq:theta_gamma_inequality}) and (\ref{eq:lambda_conditions}).
\end{proof}
\begin{proof}
[Proof of Theorem \ref{thm:main_non_uniform}] Throughout the proof,
$n\geq 0$ and $\gamma\in(0,1]$ are fixed. Considering
$(\gamma,-\nabla U_n,n)$, let us validate the assumptions of Proposition
\ref{prop:ODE} in the order: c), then a), then b). Assumption c)
of Proposition \ref{prop:ODE} holds due to the definition of $\nabla U_n$
in (\ref{eq:gradU^N_defn}). Validating assumption a) of Proposition
\ref{prop:ODE} requires that if $x\to x^{\prime}$ in $l_{2}(\gamma)$
then $\|\nabla U_n(x,y)-\nabla U_n(x^{\prime},y)\|_{\gamma}\to0$.
But we have already validated assumption c) of Proposition \ref{prop:ODE},
so $\nabla U_n$ maps $l_{2}(\gamma)$ into $l_{2}^{n}(\gamma)$,
and
\begin{align*}
\|\nabla U_n(x,y)-\nabla U_n(x^{\prime},y)\|_{\gamma}^{2} & =\|\nabla_{0}\tilde{\phi}_{0}(x,y)-\nabla_{0}\tilde{\phi}_{0}(x^{\prime},y)\|^{2}\\
&+\sum_{m=1}^{n-1}\gamma^{m}\|\nabla_{m}\phi_{m}(x,y)-\nabla_{m}\phi_{m}(x^{\prime},y)\|^{2}\\
 & +\gamma^{n}\|\nabla_{n}\tilde{\phi}_n(x,y)-\nabla_{n}\tilde{\phi}_{n}(x^{\prime},y)\|^{2}.
\end{align*}
Also by assumption c) of Proposition \ref{prop:ODE}, $\nabla U_n(x,y)$
depends on $x$ only through $(x_{0},\ldots,x_{n})$. These observations
together with Condition \ref{assu:hmm}a) validate assumption a) of
Proposition \ref{prop:ODE}. Assumption b) of Proposition \ref{prop:ODE}
holds by an application of Lemma \ref{lem:check_dissipative_cond}.
This completes the verification of the assumptions of Proposition
\ref{prop:ODE} for $(\gamma,-\nabla U_n,n)$ and thus establishes
the existence of the fixed point $\xi_{n}$.

Our next step is to obtain bounds on $\|\xi_{n}\|_{\gamma,n}^{2}$
and $\|\xi_{n}\|_{\gamma,n+1}^{2}$. An application of Lemma \ref{lem:check_dissipative_cond}
and Cauchy-Schwartz gives:
\[
\|\xi_{n}\|_{\gamma,n}^{2}\leq\frac{1}{\lambda}\left\langle 0-\xi_{n},\nabla U_n(0,y)-0\right\rangle _{\gamma,n}\leq\frac{1}{\lambda}\|\xi_{n}\|_{\gamma,n}\|\nabla U_n(0,y)\|_{\gamma,n},
\]
hence
\begin{equation}
\|\xi_{n}\|_{\gamma,n}^{2}\leq\frac{\alpha_{\gamma,n}(y)}{\lambda^{2}}\quad\text{and}\quad\|\xi_{n}\|_{\gamma,n+1}^{2}=\gamma\|\xi_{n}\|_{\gamma,n}^{2}\leq\gamma\frac{\alpha_{\gamma,n}(y)}{\lambda^{2}},\label{eq:alpha_bound_on_xi}
\end{equation}
where the equality uses the fact that $\|\xi_{n,m}\|=0$ for $m>n$.

Now fix any $m>n$ . An application of Lemma \ref{lem:check_dissipative_cond}
and Cauchy-Schwartz gives:
\begin{equation}
\|\xi_{n}-\xi_{m}\|_{\gamma,0}^{2}\leq\frac{1}{\lambda}\left\langle \xi_{n}-\xi_{m},\nabla U_{m}(\xi_{n},y)-0\right\rangle _{\gamma,0}\leq\frac{1}{\lambda}\|\xi_{n}-\xi_{m}\|_{\gamma,0}\|\nabla U_{m}(\xi_{n},y)\|_{\gamma,0}.\label{eq:cs_proof}
\end{equation}
Observe $\nabla U_n(\xi_{n},y)=0$ implies  $\nabla_{0}\tilde{\phi}_{0}(\xi_{n},y)=\nabla_{k}\phi_{k}(\xi_{n},y)=0$
for all $k<n$.  Combining this fact with $\xi_{n}\in l_{2}^{n}(\gamma), $(\ref{eq:gradU^N_defn}),
(\ref{eq:eta_defn}) and the bound (\ref{eq:alpha_bound_on_xi})
gives:
\begin{align}
\|\nabla U_{m}(\xi_{n},y)\|_{\gamma,0}^{2} & =\sum_{k=n}^{m-1}\gamma^{k}\|\nabla\phi_{k}(\xi_{n},y)\|^{2}+\gamma^{m}\|\nabla\tilde{\phi}_{m}(\xi_{n},y)\|^{2}\nonumber \\
 & \leq\sum_{k=n}^{n+1}\gamma^{k}\|\nabla\phi_{k}(\xi_{n},y)\|^{2}+\sum_{k=n+2}^{\infty}\gamma^{k}\beta_{k}(y)\nonumber \\
 & \leq\gamma^{n}\eta_{n}\left(\frac{\alpha_{\gamma,n}(y)}{\lambda^{2}},y\right)+\gamma^{n+1}\eta_{n+1}\left(\gamma\frac{\alpha_{\gamma,n}(y)}{\lambda^{2}},y\right)+\sum_{k=n+2}^{\infty}\gamma^{k}\beta_{k}(y).\label{eq:F^m_bound_proof}
\end{align}
The proof of the theorem is completed by combining this bound with
(\ref{eq:cs_proof}).
\end{proof}
\begin{proof}
[Proof of Theorem \ref{thm:main_uniform}]

The first step is to apply Proposition \ref{prop:ODE} to $(\gamma,-\partial U(\cdot,y),\infty)$.
From its definition (\ref{eq:grad_U_infty_defn}) combined with assumptions
b) and c) of Condition \ref{assu:hmm2}, it is clear that $\partial U(\cdot,y)$ maps
$l_{2}(\gamma)$ into itself and $\Pi_{\infty}=\mathrm{Id}$ by definition,
so assumption c) of Proposition \ref{prop:ODE} is satisfied. Assumption
c) of Condition \ref{assu:hmm2} is exactly what is required for assumption a) of
Proposition \ref{prop:ODE} to hold. Let us now verify assumption
b) of Proposition \ref{prop:ODE}. For any$x,x^{\prime}\in l_{2}(\gamma)$
and $n\geq 0,$
\begin{equation}
\left\langle x-x^{\prime},\partial U(x,y)-\partial U(x^{\prime},y)\right\rangle _{\gamma}=\left\langle x-x^{\prime},\nabla U_n(x,y)-\nabla U_n(x^{\prime},y)\right\rangle _{\gamma}+\Delta_{n}(x,x^{\prime})\label{eq:verify_convexity_inf}
\end{equation}
where
\begin{align}
|\Delta_{n}(x,x^{\prime})| & =|\left\langle x-x^{\prime},\partial U(x,y)-\nabla U_n(x,y)+\nabla U_n(x^{\prime},y)-\partial U(x^{\prime},y)\right\rangle _{\gamma}|\nonumber \\
 & \leq\|x-y\|_{\gamma}\{\|\nabla U_n(x,y)-\partial U(x,y)\|_{\gamma}+\|\nabla U_n(x^{\prime},y)-\partial U(x^{\prime},y)\|_{\gamma}\}.\label{eq:delta_(x,y)}
\end{align}
Using the facts that $\nabla U_n(\cdot,y)=\nabla U_n(\cdot,y)\circ\Pi_{n}$ and
$\nabla U_n(\cdot,y)$ maps $l_{2}(\gamma)$ into $l_{2}^{n}(\gamma)$, then
applying the instance of assumption b) of Proposition \ref{prop:ODE}
which has already been verified for $(\gamma,-\nabla U_n(\cdot,y),n)$ in
the proof of Theorem \ref{thm:main_non_uniform}, we have for any
$x,x^{\prime}\in l_{2}(\gamma)$,
\begin{align}
&\left\langle x-x^{\prime},\nabla U_n(x,y)-\nabla U_n(x^{\prime},y)\right\rangle _{\gamma} \nonumber\\
& =\left\langle \Pi_{n}(x)-\Pi_{n}(x^{\prime}),\nabla U_n(\cdot,y)\circ\Pi_{n}(x)-\nabla U_n(\cdot,y)\circ\Pi_{n}(x^{\prime})\right\rangle _{\gamma}\nonumber \\
 & \geq\lambda\|\Pi_{n}(x)-\Pi_{n}(x^{\prime})\|_{\gamma}^{2}\nonumber \\
 & \to\lambda\|x-x^{\prime}\|_{\gamma}^{2}\quad\mathrm{as}\quad n\to\infty.\label{eq:inner-prod_con}
\end{align}
By (\ref{eq:gradU^N_defn}), (\ref{eq:grad_U_infty_defn}) and assumptions
a) and b) of Condition \ref{assu:hmm2}, we have for any $x\in l_{2}(\gamma)$,
\begin{align}
\|\nabla U_n(x,y)-\partial U(x,y)\|_{\gamma}^{2} & =\gamma^{n}\|\nabla_{n}\tilde{\phi}_n(x,y)-\nabla_{n}\phi_n(x,y)\|^{2}\nonumber \\&+\sum_{k=n+1}^{\infty}\gamma^{k}\|\nabla_{n}\phi_n(x,y)\|^{2}
  \;\to\;0\quad\mathrm{as}\quad n\to\infty.\label{eq:pointwise_con}
\end{align}
Combining (\ref{eq:verify_convexity_inf})\textendash (\ref{eq:pointwise_con})
gives:
\begin{equation}
\left\langle x-x^{\prime},\partial U(x,y)-\partial U(x^{\prime},y)\right\rangle _{\gamma}\geq\lambda\|x-x^{\prime}\|_{\gamma}^{2},\label{eq:dissip_cond_inf}
\end{equation}
which completes the verification of assumption b) of Proposition \ref{prop:ODE}
for the triple $(\gamma,-\partial U(\cdot,y),\infty)$.

The existence of $\xi_{\infty}\in l_{2}(\gamma)$ such that $\partial U(\xi_{\infty},y)=0$,
together with (\ref{eq:dissip_cond_inf}) implies, via the same
arguments as in the proof of Theorem \ref{thm:main_non_uniform},
that equations (\ref{eq:cs_proof}) and (\ref{eq:F^m_bound_proof}) hold
not only for $m\in\mathbb{N}_{0}$ but also for $m=\infty$. Under
assumption a) of Condition \ref{assu:hmm2}, the bound: $\eta_{n}(r,y)\leq\beta_n(y)+\chi r/\gamma$
holds using (\ref{eq:eta_defn}), and plugging in this bound
 completes the proof of \eqref{eq:thm_2_bound}. 

To see that $\|\xi_n-\xi_\infty\|_\gamma\to 0 $ as $n\to\infty$, applying the Cauchy-Schwarz inequality to \eqref{eq:U^n_decay_convexity},
$$
\|x-x^\prime\|_\gamma^2 \leq \frac{1}{\lambda^2}\|\nabla U_n(x,y)-\nabla U_n(x^{\prime},y)\|_\gamma^2,\quad \forall x,x^{\prime}\in l_{2}^{n}(\gamma).
$$
Choosing $x=\xi_n$ and $x^\prime = \Pi_n(\xi_\infty)$,
\begin{align*}
\|\xi_n-\xi_\infty\|_\gamma^2 &= \|\xi_\infty-\Pi_n(\xi_\infty)\|_\gamma^2 + \|\xi_n-\Pi_n(\xi_\infty)\|_\gamma^2\\
&\leq \|\xi_\infty-\Pi_n(\xi_\infty)\|_\gamma^2 + \frac{1}{\lambda^2} \| \nabla U_n(\Pi_n(\xi_\infty),y)\|_\gamma^2.
\end{align*}
Combining with \eqref{eq:pointwise_con},   $\xi_\infty\in l_2(\gamma)$ and parts a) and b) of Condition \ref{assu:hmm2} we find that indeed $\|\xi_n-\xi_\infty\|_\gamma\to 0 $ as $n\to\infty$.

\end{proof}
%

%\begin{lemma}\label{lem:stationary}
%For any stationary process of nonnegative random variables $(Z_{n})_{n\geq 0}$ such that $\mathbf{E}(0\vee\log Z_{0})<\infty$ and any $\rho\in(0,1)$,
%\[
%\sup_{n}\rho^{n}Z_{n}<\infty,\quad a.s.
%\]
%\end{lemma}
%\begin{proof}
%\cite[Lemma 7]{douc2012asymptotic}
%\end{proof}

\section{Proofs for section \ref{sec:Discussion-and-application} }\label{sec:discussion_proofs}
\begin{proof}
[Proof of Lemma \ref{lem:AR1}]In the setting described in section
\ref{sec:Discussion-and-application},
\begin{align*}
 & \left\langle x_{n}-x_{n}^{\prime},\nabla_{n}\log f(x_{n-1},x_{n})-\nabla_{n}\log f(x_{n-1}^{\prime},x_{n}^{\prime}) \right\rangle \\
 &\qquad+\left\langle x_{n}-x_{n}^{\prime},\nabla_{n}\log f(x_{n},x_{n+1})-\nabla_{n}\log f(x_{n}^{\prime},x_{n+1}^{\prime})\right\rangle \\
 &\quad =-(x_{n}-x_{n}^{\prime})^{\mathrm{T}}(\Sigma^{-1}+A^{\mathrm{T}}\Sigma^{-1}A)(x_{n}-x_{n}^{\prime})+(x_{n}-x_{n}^{\prime})^{\mathrm{T}}\Sigma^{-1}A(x_{n-1}-x_{n-1}^{\prime})\\
 & \qquad+(x_{n}-x_{n}^{\prime})^{\mathrm{T}}A^{\mathrm{T}}\Sigma^{-1}(x_{n+1}-x_{n+1}),\\
 & \left\langle x_{n}-x_{n}^{\prime},\nabla_{n}\log f(x_{n-1},x_{n})-\nabla_{n}\log f(x_{n-1}^{\prime},x_{n}^{\prime})\right\rangle \\
 &\quad =-(x_{n}-x_{n}^{\prime})^{\mathrm{T}}\Sigma^{-1}(x_{n}-x_{n}^{\prime})+(x_{n}-x_{n}^{\prime})^{\mathrm{T}}\Sigma^{-1}A(x_{n-1}-x_{n-1}^{\prime}),\\
 & \left\langle x_{0}-x_{0}^{\prime},\nabla_{0}\log\mu(x_{0})-\nabla_{0}\log\mu(x_{0}^{\prime})\right\rangle \\
 & \quad =-(x_{0}-x_{0}^{\prime})^{\mathrm{T}}\Sigma_{0}^{-1}(x_{0}-x_{0}^{\prime}),\\
 & \left\langle x_{0}-x_{0}^{\prime},\nabla_{0}\log f(x_{0},x_{1})-\nabla_{0}\log f(x_{0}^{\prime},x_{1}^{\prime})\right\rangle \\
 &\quad =-(x_{0}-x_{0}^{\prime})^{\mathrm{T}}A^{\mathrm{T}}\Sigma^{-1}A(x_{0}-x_{0}^{\prime})+(x_{0}-x_{0}^{\prime})^{\mathrm{T}}A^{\mathrm{T}}\Sigma^{-1}(x_{1}-x_{1}^{\prime}).
\end{align*}
Combining these expressions with (\ref{eq:semi_log_concavity_g}),  (\ref{eq:phi_n_defn})-(\ref{eq:phi_tilde_n_defn}) and applying the following bounds:
\begin{multline}
\inf_{u\neq0}\frac{u^{\mathrm{T}}(\Sigma^{-1}+A^{\mathrm{T}}\Sigma^{-1}A)u}{\|u\|^{2}}\geq\rho_{\min}(\Sigma^{-1})+\rho_{\min}(A^{\mathrm{T}}A)\rho_{\min}(\Sigma^{-1})\\=\rho_{\mathrm{max}}(\Sigma)^{-1}\{1+\rho_{\min}(A^{\mathrm{T}}A)\},
\end{multline}
\begin{align*}
 & \inf_{u\neq0}\frac{u^{\mathrm{T}}\Sigma^{-1}u}{\|u\|^{2}}\wedge\inf_{u\neq0}\frac{u^{\mathrm{T}}(\Sigma_{0}^{-1}+A^{\mathrm{T}}\Sigma^{-1}A)u}{\|u\|^{2}}\\
 & \geq\rho_{\min}(\Sigma^{-1})\wedge\{\rho_{\min}(\Sigma_{0}^{-1})+\rho_{\min}(A^{\mathrm{T}}A)\rho_{\min}(\Sigma^{-1})\}\\
 & =\rho_{\mathrm{max}}(\Sigma)^{-1}\wedge\{\rho_{\mathrm{max}}(\Sigma_{0})^{-1}+\rho_{\min}(A^{\mathrm{T}}A)\rho_{\mathrm{max}}(\Sigma)^{-1}\},
\end{align*}
%and, using Cauchy-Schwarz,
\[
\sup_{u,v\neq0}\frac{\left|u^{\mathrm{T}}A^{\mathrm{T}}\Sigma^{-1}v\right|}{\|u\|\|v\|}\leq\rho_{\max}(A^{\mathrm{T}}A)^{1/2}\rho_{\mathrm{max}}(\Sigma^{-1})=\rho_{\max}(A^{\mathrm{T}}A)^{1/2}\rho_{\mathrm{min}}(\Sigma)^{-1},
\]
gives the expressions for $\zeta,\tilde{\zeta},\theta$ in the statement
of the lemma.

\end{proof}

\begin{proof}[Proof of Lemma \ref{lem:non_gauss}]
From the Lipschitz assumptions we have:
\begin{align*}
&\|\nabla_{n}\log f(x_{n-1},x_{n})-\nabla_{n}\log f(x_{n-1}^{\prime},x_{n}^{\prime})\| \\
& =\|\nabla\psi\{x_{n}-A(x_{n-1})\}-\nabla\psi\{x_{n}^{\prime}-A(x_{n-1}^{\prime})\}\|\\
 & \leq L_{\nabla\psi}\|x_{n}-x_{n}^{\prime}\|+L_{\nabla\psi}L_{A}\|x_{n-1}-x_{n-1}^{\prime}\|,\\
&\|\nabla_{n}\log f(x_{n},x_{n+1})-\nabla_{n}\log f(x_{n}^{\prime},x_{n+1}^{\prime})\| \\
& \leq\|\nabla A(x_{n})\|_{\mathrm{op}}\|\nabla\psi\{x_{n+1}-A(x_{n})\}-\nabla\psi\{x_{n+1}^{\prime}-A(x_{n}^{\prime})\}\|\\
 & \quad+\|\nabla\psi\{x_{n+1}^{\prime}-A(x_{n}^{\prime})\}\|||\nabla A(x_{n})-\nabla A(x_{n}^{\prime})\|_{\mathrm{op}}\\
 & \leq L_{A}L_{\nabla\psi}\|x_{n+1}-x_{n+1}^{\prime}\|+L_{A}^{2}L_{\nabla\psi}\|x_{n}-x_{n}^{\prime}\|\\
 & \quad+L_{\psi}L_{\nabla A}\|x_{n}-x_{n}^{\prime}\|\\
 & =(L_{A}^{2}L_{\nabla\psi}+L_{\psi}L_{\nabla A})\|x_{n}-x_{n}^{\prime}\|+L_{\nabla\psi}L_{A}\|x_{n+1}-x_{n+1}^{\prime}\|,\\
&\|\nabla\log\mu(x_{0})-\nabla\log\mu(x_{0}^{\prime})\|  =\|\nabla\psi_{0}(x_{0})-\nabla\psi_{0}(x_{0}^{\prime})\|\leq L_{\nabla\psi}\|x_{0}-x_{0}^{\prime}\|.
\end{align*}
The proof is completed by combining these estimates with assumption
b) of the Lemma and (\ref{eq:phi_n_defn})-(\ref{eq:phi_tilde_n_defn}).
\end{proof}

\begin{proof}[Proof of Lemma \ref{lem:poly_moment_growth}]
\begin{multline}
\sum_{n=0}^{\infty}\mathbf{P}\left(\rho^{n}Z_{n} \geq 1\right) = \sum_{n=0}^{\infty}\mathbf{P}\left(|Z_{n}|^s \geq \rho^{-sn}\right)\leq \sum_{n=0}^{\infty} \rho^{sn}\mathbf{E}\left[|Z_{n}|^s \right] \\
\leq \left( \sup_{n\geq0} \frac{\mathbf{E}[|Z_n|^s]}{(n+1)^{p}}\right) \sum_{n=0}^{\infty} (n+1)^p \rho^{sn} <\infty,
\end{multline}
where the first inequality is Markov's inequality. The result follows from the Borel-Cantelli lemma.
\end{proof}

\begin{proof}[Proof of Lemma \ref{lem:alpha_and_beta}]
From (\ref{eq:alpha_and_beta_defns}) and (\ref{eq:phi_n_defn})-(\ref{eq:phi_tilde_n_defn}), there exists a finite constant $c$ such that 
\begin{equation}
\beta_n(y)\leq c +  \|\left.\nabla_{x}\log g(x,y_{n})\right|_{x=0}\|^{2}.\label{eq:beta_bound1}
\end{equation}
The claim of the lemma that $ \sup_{n\geq 0 } \rho^n \beta_{n}(Y)<\infty$ a.s. then follows from the fact that for $a\geq 0 $ and $s\in(0,1]$, the function $a\mapsto a^s $ is subadditive, combined with Lemma \ref{lem:poly_moment_growth}.

 Using the same subadditivity again, and \eqref{eq:beta_bound1},
 $$
 \alpha_{\gamma,n}(y) ^s \leq \sum_{m=0}^{n}\gamma^{s(n-m)}\beta_m(y)^s \leq \frac{c^s}{1-\gamma^s} +  \sum_{m=0}^n \gamma^{s(n-m)}\|\left.\nabla_{x}\log g(x,y_{m})\right|_{x=0}\|^{2s}.
 $$
Combining this bound with the assumption of the lemma and the fact that $\sum_{m=0}^n m^p$ grows no faster than $n^{p+1}$ as $n\to
\infty$, there exists a  finite constant $c^\prime(p)$ such that:
\begin{align*}
 \mathbf{E}[ \alpha_{\gamma,n}(Y) ^s] &\leq \frac{c^s}{1-\gamma^s} + \sum_{m=0}^n  \mathbf{E}\left[\|\left.\nabla_{x}\log g(x,Y_{m})\right|_{x=0}\|^{2s} \right]\\
 &  \leq \frac{c^s}{1-\gamma^s} +c^{\prime}(p) n^{p+1}.
  \end{align*}
  %Faulhabers formula gives $\sum_{m=0}^n m^p$ is order of $n^{p+1}$
  The claim of the lemma that $\sup_{n\geq 0 } \rho^n \alpha_{\gamma,n}(Y)<\infty$ a.s. then follows by applying Lemma \ref{lem:poly_moment_growth}.  The proof is completed via bounds:
 $$
 \rho^n \alpha_{\gamma,n}(Y) =\tilde{\rho}^n \left(\frac{\rho}{\tilde{\rho}}\right)^n \alpha_{\gamma,n}(Y)\leq  \tilde{\rho}^n \sup_{m\geq 0} \left(\frac{\rho}{\tilde{\rho}}\right)^m  \alpha_{\gamma,m}(Y)
 $$
 and
 $$
 \sum_{k=n}^{\infty} \rho^k \beta_k(Y) =  \sum_{k=n}^{\infty} \tilde{\rho}^k \left(\frac{\rho}{\tilde{\rho}}\right)^k \beta_k(Y) \leq \frac{\tilde{\rho}^n}{1-\tilde{\rho}}  \sup_{m\geq 0}\left(\frac{\rho}{\tilde{\rho}}\right)^m \beta_m(Y). 
 $$
 
\end{proof}

\end{appendix}

\bibliographystyle{plainnat} % Style BST file
\bibliography{filtering}       % Bibliography file (usually '*.bib')

%% or include bibliography directly:
% \begin{thebibliography}{}
% \bibitem{b1}
% \end{thebibliography}

\end{document}